\titleformat{\chapter}[display]
{\normalfont\huge\bfseries}{\chaptertitlename\\thechapter}{20pt}{\Huge}
\titleformat{\subsubsection}[runin]
{\normalfont\normalsize\bfseries}{\thesubsubsection}{1em}{}
\titleformat{\paragraph}[runin]
{\normalfont\normalsize\bfseries}{\theparagraph}{1em}{}
\titleformat{\subparagraph}[runin]
{\normalfont\normalsize\bfseries}{\thesubparagraph}{1em}{}
\titlespacing*{\chapter} {0pt}{50pt}{40pt}
\titlespacing*{\section} {0pt}{3.5ex plus 1ex minus .2ex}{2.3ex plus .2ex}
\titlespacing*{\subsection} {0pt}{3.25ex plus 1ex minus .2ex}{1.5ex plus .2ex}
\titlespacing*{\subsubsection}{0pt}{3.25ex plus 1ex minus .2ex}{1.5ex plus .2ex}
\titlespacing*{\paragraph} {0pt}{3.25ex plus 1ex minus .2ex}{1em}
\titlespacing*{\subparagraph} {\parindent}{3.25ex plus 1ex minus .2ex}{1em}
\keywords{Jacobian Conjecture}
\subjclass[2010]{Primary 14R15; Secondary 13F20}
\newtheorem{theorem}{Theorem}[section]
\newtheorem{lemma}[theorem]{Lemma}
\newtheorem{proposition}[theorem]{Proposition}
\newtheorem{corollary}[theorem]{Corollary}
\theoremstyle{definition}
\newtheorem{definition}[theorem]{Definition}
\newtheorem{notation}[theorem]{Notation}
\newtheorem{example}[theorem]{Example}
\theoremstyle{remark}
\newtheorem{remark}[theorem]{Remark}
\DeclareMathOperator{\HH}{H}
\DeclareMathOperator{\Supp}{Supp}
\DeclareMathOperator{\en}{en}
\DeclareMathOperator{\factors}{factors}
\DeclareMathOperator{\sgn}{sgn}
\DeclareMathOperator{\st}{st}
\DeclareMathOperator{\ev}{ev}
\DeclareMathOperator{\dir}{dir}
\DeclareMathOperator{\Dir}{Dir}
\DeclareMathOperator{\Ima}{Im}
\DeclareMathOperator{\Coef}{Coef}
\DeclareMathOperator{\lcm}{lcm}
\DeclareMathOperator{\mult}{mult}
\DeclareMathOperator{\gap}{gap}
\newcommand{\ov}{\overline}
\begin{document}

\title[The two-dimensional Jacobian conjecture]{The two-dimensional Jacobian conjecture and the lower side of the Newton polygon}

\author[Jorge A. Guccione]{Jorge A. Guccione$^{1,}$$^2$}
\address{$^1$ Universidad de Buenos Aires. Facultad de Ciencias Exactas y Naturales. Departamento de Matem\'atica. Buenos Aires. Argentina}
\address{$^2$ CONICET-Universidad de Buenos Aires. Instituto de Investigaciones Matem\'aticas ``Luis A. santal\'o'' (IMAS). Buenos Aires. Argentina}


\email{vander@dm.uba.ar}

\author[Juan J. Guccione]{Juan J. Guccione$^{1,}$$^3$}
\address{$^3$ CONICET. Instituto Argentino de Matem\'atica (IAM). Buenos Aires. Argentina}


\email{jjgucci@dm.uba.ar}

\thanks{Jorge A. Guccione and Juan J. Guccione were supported by UBACyT 20020150100153BA (UBA) and PIP 11220110100800CO (CONICET)}

\author{Christian Valqui$^{4,}$$^5$}
\address{$^4$Pontificia Universidad Cat\'olica del Per\'u, Secci\'on Matem\'aticas, PUCP,
Av. Universitaria 1801, San Miguel, Lima 32, Per\'u.}

\address{$^5$Instituto de Matem\'atica y Ciencias Afines (IMCA) Calle Los Bi\'ologos 245. Urb San C\'esar.
La Molina, Lima 12, Per\'u.}
\email{cvalqui@pucp.edu.pe}
\thanks{Christian Valqui was supported by PUCP-DGI-CAP-2015-185.}

\begin{abstract}
We prove that if the Jacobian Conjecture in two variables is false and $(P,Q)$ is a standard minimal pair, then the Newton polygon $\HH(P)$ of $P$ must satisfy several restrictions that had not been found previously. This allows us to discard some of the corners found in~\cite{GGV}*{Remark~7.9} for $\HH(P)$, together with some of the infinite families found in~\cite{H}*{Theorem~2.25}.
\end{abstract}

\maketitle


\section*{Introduction}

Let $K$ be a characteristic zero field and let $L\coloneqq K[x,y]$ be the polynomial algebra in two indeterminates. The Jacobian Conjecture in dimension two, stated by Keller in~\cite{K}, says that any pair of polynomials $P,Q\in L$ with $[P,Q]\coloneqq \partial_x P \partial_y Q - \partial_x Q \partial_y P\in K^{\times}$ defines an automorphism of $L$. If this conjecture is false, then the degrees of the components $P\coloneqq f(x)$, $Q\coloneqq f(y)$ of the hypothetical counterexample $f$ satisfy certain restrictions, found by various authors. In~\cite{M} the author finds that the only possible pairs $(\deg(P),\deg(Q))$ with both entries lower than $100$ are $(64,48)$, $(50,75)$, $(56,84)$ and $(66,99)$. Then he discards these four cases by hand. In~\cite{H}*{Theorem~2.25} Heitmann determines for  possible pairs $(\deg(P),\deg(Q))$ various families of the form $(a+bj,c+dj)$ with $a,b,c,d\in\mathds{N}$ and $j$ running on $\mathds{N}$, and confirms the four pairs found by Moh. For example, the family $16(1+2j,1+3j)$ yields for $j=1$ the first case found by Moh.

But Heitmann determines not only the degrees, he also says something about the shape of the support of $P$ and $Q$ in a hypothetical counterexample. He associates to each counterexample a corner and gives a list of small possible corners in~\cite{H}*{Theorem~2.24}. For example, the corner $(4,12)$ together with the family $(1+2j,1+3j)$ yields the family of degrees mentioned above. The same possible corners of~\cite{H} were confirmed in~\cite{GGV}*{Remark~7.9}, using more elementary methods and discrete geometry on the plane. The shape of the support of $P$ was described in more detail, finding an edge with starting point $A'$ below the main diagonal and end point $A$ above the diagonal (See for example the beginning of section 6 in~\cite{GGV}). Various restriction were found for $A_0\coloneqq \frac{1}{m} A$ where $m:=\deg(P)/\gcd(\deg(P),\deg(Q))$, leading to the list of~\cite{GGV}*{Remark~7.9}, which contains the list of~\cite{H}*{Theorem~2.24}.

In the present paper we focus on restrictions for $A_0'\coloneqq \frac{1}{m} A'$, which allows us to discard some of the corners found in~\cite{GGV}*{Remark~7.9}, together with some of the infinite families found in~\cite{H}*{Theorem~2.25}. We will see in Theorem~\ref{condicion principal} that for such an $A_0'$ there must exist a direction $(\rho,\sigma)$ and $(\rho,\sigma)$-homogeneous elements $G,R\in L$ such that
\begin{equation}\label{restriccion}
A_0'=\en_{\rho,\sigma}(R)\quad\text{and}\quad [G,R]= R^2.
\end{equation}
Here we adopt notations and definitions from~\cite{GGV}*{Section~1}. The condition~\eqref{restriccion} is the main restriction that allows to discard as possible $A_0'$ all the points for which such $(\rho,\sigma)$, $G$ and $R$ do not exist. Our findings can be resumed in the following three useful results:

\begin{itemize}

\smallskip

\item[-] We cannot discard any point below the line $2y=x$ as possible $A_0'$ (Proposition~\ref{ejemplo}).

\smallskip

\item[-] If $(a',b')$ is a possible $A_0'$, then $b'\le (a'-b'-1)^2$ (Proposition~\ref{cota}).

\smallskip

\item[-] No possible $A_0'$ can be of the form $\wp(n'+1,n')$ for $\wp,n'\in\mathds{N}$ (Proposition~\ref{casos imposibles}).

\smallskip
\end{itemize}
In order to obtain the last two conditions we use our main technical result, Proposition~\ref{finitas direcciones}, which yields restrictions on the directions $(\rho,\sigma)$ that can occur for an $R$ as above if you fix the starting point. This result also allows to write an algorithm to determine all possible $A_0'$ with $v_{1,-1}(A_0')<N$ for some fixed $N$  (by Corollary~\ref{cota} there are only finitely many), which we will do in a future article.

A straightforward computation shows that for any $(a,b)$ with $a>b$, the $(1,-1)$-homogeneous elements $R\coloneqq x^{a-b}(1+xy)^b$ and $G\coloneqq -\frac{1}{a-b}xy R$ satisfy
$$
(a,b)=\en_{1,-1}(R)\quad\text{and}\quad [G,R]= R^2.
$$
Hence, in order to obtain the restrictions for $A_0'$ we need the following result of~\cite{CN}: The support of a component of a Jacobian pair cannot have an edge with slope~$1$. In the first section we generalize this result, using the more elementary proof of Makar Limanov in~\cite{ML}.  In the second section, assuming that the Jacobian conjecture is false, we take a Jacobian pair $(P,Q)$ that is a minimal pair and a standard $(m,n)$-pair with $P(0,0)Q(0,0)\ne 0$ and we study the lower part of $\frac{1}{m}H(P)$, where $H(P)$ is the Newton polygon of $P$. In the third section we introduce the notion of admissible chain, which encodes in an abstract way the properties of the lower part of $\frac{1}{m}H(P)$.

\section{Generalizing Cassier-Nouges}

Throughout this paper $l$ denotes a fixed natural number. Let $K$ be characteristic zero field. Given a $K$-algebra $A$ and $\eta\in A$, we let
$\ev_{\eta}\colon A[y]\to A$ denote the {\em evaluation map} $\ev_{\eta}(P)\coloneqq P(\eta)$. For the sake of brevity, we set $L^{(l)}\coloneqq K[x^{\pm \frac{1}{l}},y]$, $\hat L_-^{(l)}\coloneqq K((x^{-1/l}))[y]$ and $\hat L_+^{(l)}\coloneqq K((x^{1/l}))[y]$. Let~$P=\sum a_{\frac{i}{l},j} x^{\frac{i}{l}} y^j \in \hat L_+^{(l)}\cup \hat L_-^{(l)}$. By definition the {\em coefficient} at $x^{\frac{i}{l}}y^j$ of $P$ and the {\em support} of~$P$ are
$$
\Coef_{x^{\frac{i}{l}} y^j}\coloneqq a_{\frac{i}{l},j}\quad\text{and}\quad \Supp(P) \coloneqq  \left\{\left(i/l,j\right): a_{\frac{i}{l},j}\ne 0\right\},
$$
respectively. We call a pair $(\rho,\sigma)\in \mathds{Z}^2$ a {\em direction} if $\gcd(\rho,\sigma)=1$. We will denote by $\mathfrak{V}$ the set of directions. For $(\rho,\sigma)\in \mathfrak{V}$ and $P\in \hat L_-^{(l)}\cup \hat L_+^{(l)}$, we define the {\em $(\rho,\sigma)$-degree} of $P$ as
$$
v_{\rho,\sigma}(P)\coloneqq  \sup\Bigl\{\frac{i}{l} \rho +j\sigma : (i/l,j) \in \Supp(P)\Bigr\}\in \mathds{R}\cup\{\infty\}.
$$
Note that if $\rho>0$, then $v_{\rho,\sigma}(P)<\infty$ if and only if $P\in \hat L_-^{(l)}$, while if $\rho<0$, then $v_{\rho,\sigma}(P)<\infty$ if and only if $P\in \hat L_+^{(l)}$. For $P$ satisfying $v_{\rho,\sigma}(P)<\infty$, we define the {\em $(\rho,\sigma)$-leading term} of $P$ as
$$
\ell_{\rho,\sigma}(P)\coloneqq   \sum_{\{\rho \frac{i}{l} + \sigma j = v_{\rho,\sigma}(P)\}} a_{\frac{i}{l},j} x^{\frac{i}{l}} y^j.
$$
We define the set of {\em directions associated with} $P$ as
$$
\Dir(P)\coloneqq \{(\rho,\sigma)\in\mathfrak{V}:v_{\rho,\sigma}(P)<\infty\text{ and }\#\Supp(\ell_{\rho,\sigma}(P))>1\}.
$$
Let $\eta\in K((x^{-1/l}))$ and $P\in \hat L_-^{(l)}$ or $\eta\in K((x^{1/l}))$ and $P\in \hat L_+^{(l)}$. A straightforward computation shows that if $v_{\rho,\sigma}(\eta)=v_{\rho,\sigma}(y)=\sigma$ and $v_{\rho,\sigma}(P)<\infty$, then
\begin{equation}\label{evaluacion en el borde}
\ev_{\ell_{\rho,\sigma}(\eta)}(\ell_{\rho,\sigma}(P))=\ell_{\rho,\sigma}(\ev_{\eta}(P)),
\end{equation}
whenever the left hand side is nonzero. The algebras $\hat L_-^{(l)}$ and $\hat L_+^{(l)}$ are topological algebras in a natural way. A local base at zero of $\hat L_-^{(l)}$ is the family $(V_u)_{u\in \mathds{Z}}$, where
$$
V_u \coloneqq  \left\{P\in \hat L_-^{(l)}: \text{ if $(i/l,j)\in \Supp(P)$, then $i\le u$} \right\},
$$
while a local base at zero of $\hat L_+^{(l)}$ is the family $(V_u)_{u\in \mathds{Z}}$, where
$$
V_u \coloneqq  \left\{P\in \hat L_+^{(l)}: \text{ if $(i/l,j)\in \Supp(P)$, then $i\ge u$} \right\}.
$$
There is a unique isomorphism of topological algebras $\varphi\colon \hat L_+^{(l)}\to \hat L_-^{(l)}$ such that $\varphi(x^{1/l}) = x^{-1/l}$ and $\varphi(y) = y$. We define two continuous derivations $D_x$ and $D_y$ on $\hat L_-^{(l)}$ and $\hat L_+^{(l)}$, by
$$
D_x(x^{1/l})\coloneqq \frac 1l x^{\frac 1l -1},\qquad D_x(y)\coloneqq 0,\qquad D_y(x^{1/l})\coloneqq 0\qquad\text{and}\qquad D_y(y)\coloneqq 1.
$$
It is easy to check that
$$
\Ima(D_x)=\left\{P =\sum a_{\frac{i}{l},j} x^{\frac{i}{l}} y^j : a_{-1,j} = 0 \text{ for all } j\right\}.
$$
Moreover, a direct computation shows that
\begin{equation}\label{derivada total}
D_x(\ev_{\eta}(P))=\ev_{\eta}(D_x(P))+\ev_{\eta}(D_y(P))D_x(\eta).
\end{equation}
We also define continuous linear maps $\int_y$ on $\hat L_-^{(l)}$ and $\hat L_+^{(l)}$, and $\int_x$ on $\Ima(D_x)$, by
$$
\int_y x^{\frac{i}{l}} y^j\coloneqq \frac{1}{j+1}x^{\frac{i}{l}} y^{j+1}\qquad\text{and}\qquad \int_x x^{\frac{i}{l}} y^j\coloneqq \frac{1}{i/l+1}x^{\frac{i}{l}+1} y^j.
$$

\begin{lemma}\label{forma exacta}
Let $g, f\in \hat L_-^{(l)}$. Assume that $g\in\Ima(D_x)$. If $D_y(g)=D_x(f)$, then the differential form $gdx+fdy$ is exact; i.e., there exists $H\in \hat L_-^{(l)}$ such that $D_y(H)=f$ and $D_x(H)=g$.
\end{lemma}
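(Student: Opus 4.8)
The plan is to prove this Poincar\'e-type lemma by first integrating in $x$ and then correcting by a function of $y$ alone. The hypothesis $g\in\Ima(D_x)$ is precisely what makes the first step work: it says that $g$ has no $x^{-1}$-component, so that $H_0\coloneqq\int_x g$ is a well-defined element of $\hat L_-^{(l)}$, and from the defining formula of $\int_x$ one checks on monomials (and then by continuity and linearity) that $D_x(H_0)=g$; indeed $D_x\circ\int_x$ is the identity on $\Ima(D_x)$, though not on all of $\hat L_-^{(l)}$. It is essential to integrate in $x$ before $y$: had we started instead from $\int_y f$ we would be left with the task of integrating in $x$ an element of $K((x^{-1/l}))$ possibly containing an $x^{-1}$-term, which lies outside $\Ima(D_x)$.

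Next I would look at the error term $c\coloneqq f-D_y(H_0)\in\hat L_-^{(l)}$ and show that it depends only on $y$. Since $D_x$ and $D_y$ are continuous derivations that commute on the topological generators $x^{1/l}$ and $y$ (both $D_xD_y$ and $D_yD_x$ vanish there), their commutator is a continuous derivation vanishing on a dense subalgebra, hence $D_xD_y=D_yD_x$ on all of $\hat L_-^{(l)}$. Therefore
\[
D_x(c)=D_x(f)-D_xD_y(H_0)=D_x(f)-D_yD_x(H_0)=D_x(f)-D_y(g)=0,
\]
where the last equality is the hypothesis. Finally, writing an element of $\hat L_-^{(l)}$ as $\sum_j p_j y^j$ with $p_j\in K((x^{-1/l}))$ and using that $K$ has characteristic zero, one sees that $D_x$ annihilates it precisely when each $p_j$ is a scalar, i.e. $\ker(D_x)=K[y]$; hence $c=c(y)\in K[y]$.

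It then remains to set $H\coloneqq H_0+\int_y c$, which lies in $\hat L_-^{(l)}$ since both summands do. As $\int_y c\in K[y]\subseteq\ker(D_x)$ we get $D_x(H)=D_x(H_0)=g$, and since $D_y\circ\int_y=\ide$ (again checked on monomials) we get $D_y(H)=D_y(H_0)+c=f$, as required; this exhibits the primitive $H$ and proves that $g\,dx+f\,dy$ is exact. I do not anticipate a genuine difficulty here; the only care needed is topological bookkeeping --- verifying that $\int_x$ and $\int_y$ are continuous with values in the stated algebras, that $D_x\circ\int_x=\ide$ holds on $\Ima(D_x)$ (and not merely formally) and $D_y\circ\int_y=\ide$ on $\hat L_-^{(l)}$, and that the commutation relation between $D_x$ and $D_y$ extends from the generators to arbitrary series by continuity.
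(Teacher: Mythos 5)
Your proof is correct: $H_0\coloneqq\int_x g$ is well defined precisely because $g\in\Ima(D_x)$, the commutation $D_xD_y=D_yD_x$ and the identification $\ker(D_x)=K[y]$ both hold (each is checked on monomials and extended by linearity and continuity, using characteristic zero for the kernel computation), so $H\coloneqq H_0+\int_y\bigl(f-D_y(H_0)\bigr)$ has the required properties. The paper argues by the mirror-image construction: it simply exhibits $H\coloneqq\int_y f+\int_x\bigl(g-\int_y D_y(g)\bigr)$, i.e.\ it integrates $f$ in $y$ first and then corrects by $\int_x$ applied to the $y$-degree-zero part of $g$, and verifies $D_y(H)=f$, $D_x(H)=g$ by a direct computation; your route trades that closed formula for the two structural facts above (kernel of $D_x$ and commutativity), which is slightly longer but equally valid. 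One side remark of yours is inaccurate, though it does not affect your argument: it is not essential to integrate in $x$ first. If one starts from $\int_y f$, the error $g-D_x\bigl(\int_y f\bigr)=g-\int_y D_y(g)$ is exactly the $y$-degree-zero component of $g$, and this has no $x^{-1}$-term because $g\in\Ima(D_x)$ excludes coefficients at $x^{-1}y^j$ for every $j$; so the opposite order goes through without difficulty, and is in fact the order the paper uses.
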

\begin{proof}
Consider
$$
H\coloneqq \int_y f+\int_x\left(g- \int_y D_y(g)\right).
$$
A direct computation shows that $D_y(H)=f$ and $D_x(H)=g$.
\end{proof}

\begin{proposition}\label{puiseux} Let $P\in L^{(l)}$ and $n\coloneqq v_{0,1}(P)$. There exist $u\in K[x^{\frac 1l},x^{-\frac 1l}]$, $r\in\mathds{N}$, and a family $(\eta_i)_{1\le i\le n}$ of elements of $K((x^{-1/r}))$, such that
\begin{equation}\label{producto}
P=u \prod_{i=1}^{n}(y-\eta_i).
\end{equation}
\end{proposition}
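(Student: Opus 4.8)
The statement is the Newton--Puiseux theorem in disguise, and the plan is to reduce it to the classical fact that the field $\bigcup_{r\ge 1}K((x^{-1/r}))$ of Puiseux series in $x^{-1}$ over $K$ is algebraically closed (this uses that $K$ is algebraically closed). We may assume $P\ne 0$. Write $P=\sum_{j=0}^n u_j y^j$ with $u_j\in K[x^{\frac 1l},x^{-\frac 1l}]$ and $u_n\ne 0$, and put $u\coloneqq u_n$. Since $K((x^{-1/l}))$ is a field, $u$ is invertible in it, so $u^{-1}P\in K((x^{-1/l}))[y]$ is monic of degree $n$ in $y$, and it suffices to split $u^{-1}P$ into linear factors $\prod_{i=1}^n(y-\eta_i)$ with all $\eta_i$ lying in one and the same $K((x^{-1/r}))$: then $P=u\prod_{i=1}^n(y-\eta_i)$ is the desired factorization~\eqref{producto}, and for $n=0$ the empty product already works.

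The heart of the matter is the following, to be proved by induction on the $y$-degree $m$: every monic polynomial $f=y^m+c_{m-1}y^{m-1}+\dots+c_0$ with coefficients in some $K((x^{-1/s}))$ splits into linear factors over $K((x^{-1/r}))$ for a suitable multiple $r$ of $s$. The case $m\le 1$ is immediate, and for $m\ge 2$ it is enough to produce one root $\eta\in K((x^{-1/r}))$, since then $f=(y-\eta)g$ with $g$ monic of degree $m-1$ over $K((x^{-1/r}))$ and the inductive hypothesis applies to $g$. To produce $\eta$ one runs the classical Newton-polygon recursion: letting $v$ be the valuation normalised by $v(x^{-1/s})=1$, from the Newton polygon of $f$ (the relevant edge of the convex hull of the points $(j,v(c_j))$, with $c_m=1$) one reads off a leading exponent $e_1$ and, as a nonzero root of the associated characteristic equation, a leading coefficient $\gamma_1\in K$ — this is precisely where algebraic closedness (and characteristic zero) of $K$ is used. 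Substituting $y=\gamma_1 x^{e_1}+y_1$ produces a new monic polynomial of degree $m$ whose constant term has strictly larger $v$-value; iterating gives partial sums $\eta_{(j)}\coloneqq\sum_{k\le j}\gamma_k x^{e_k}$ with exponents $e_1>e_2>\cdots$ tending to $-\infty$, hence convergent in the $x$-adic topology to some $\eta$. The standard estimate bounding the denominators of the $e_k$ shows $\eta\in K((x^{-1/r}))$ for a fixed $r$, and continuity of evaluation, together with the fact that $f$ takes values of arbitrarily large $v$-value on the $\eta_{(j)}$, forces $f(\eta)=0$.

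Granting this, applying it to $u^{-1}P$ yields $u^{-1}P=\prod_{i=1}^n(y-\eta_i)$ with $\eta_i\in K((x^{-1/r_i}))$; since $K((x^{-1/r_i}))\subseteq K((x^{-1/r}))$ when $r_i\mid r$, taking $r\coloneqq\lcm(r_1,\dots,r_n)$ places all the $\eta_i$ in $K((x^{-1/r}))$, and multiplying through by $u$ finishes the proof. The only genuine obstacle is the inductive step of the second paragraph — making the Newton-polygon recursion rigorous and, above all, bounding the denominators of the exponents $e_k$ so that a single $r$ works for all $n$ roots simultaneously; the rest is bookkeeping, and one could instead simply invoke the Newton--Puiseux theorem.
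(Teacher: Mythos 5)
Your proposal is correct and takes essentially the same route as the paper: the paper's entire proof is a citation of the Newton--Puiseux theorem (Eisenbud, Corollary 13.15), which is exactly the statement you sketch via the Newton-polygon recursion and note one could simply invoke. The only point worth flagging is the one you yourself raise, namely that the argument (and the cited corollary) needs $K$ algebraically closed, or a passage to $\overline{K}$, a hypothesis the paper leaves implicit here.
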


\begin{proof} See~\cite{E}*{Corollary~13.15, page~295}.
\end{proof}

\begin{remark} The same result holds with each $\eta_i$ in $K((x^{1/r}))$ instead of $K((x^{-1/r}))$.
\end{remark}

\begin{proposition}\label{Newton Puiseux} Let $P\in L^{(l)}$ and $(\rho,\sigma)\in \Dir(P)$. Assume $\rho\ne 0$ and let $\sgn(\rho)$ denote the
sign of $\rho$. There exists $r\in \mathds{N}$ and $\eta\in K((x^{-\frac{\sgn(\rho)}{r}}))$ of the form
$$
\eta=\lambda x^{\sigma/\rho} + \sum_{t\rho<r\sigma} \lambda_t x^{t/r},
$$
with $\lambda\in K^{\times}$, such that $\ev_{\eta}(P)=0$. Moreover $y-\lambda x^{\sigma/\rho}$ divides $\ell_{\rho,\sigma}(P)$.
\end{proposition}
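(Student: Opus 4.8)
The plan is to read off the Puiseux root $\eta$ from the Newton--Puiseux factorization of $P$ by keeping track of $(\rho,\sigma)$-leading terms; the substantive inputs are the factorization of Proposition~\ref{puiseux} and the multiplicativity of $\ell_{\rho,\sigma}$, and the rest is bookkeeping. Assume first that $\rho>0$ (the case $\rho<0$ is handled identically, using the Remark after Proposition~\ref{puiseux} or transporting everything through $\varphi$). Since $v_{\rho,\sigma}(P)<\infty$, Proposition~\ref{puiseux} yields $r\in\mathds{N}$, $u\in K[x^{1/l},x^{-1/l}]$ and $\eta_1,\dots,\eta_n\in K((x^{-1/r}))$ with $P=u\prod_{i=1}^{n}(y-\eta_i)$. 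Because $\rho>0$, the support of every $\eta_i$ is bounded above, so $v_{\rho,\sigma}(\eta_i)<\infty$, and likewise $v_{\rho,\sigma}(u)<\infty$ and $v_{\rho,\sigma}(y-\eta_i)<\infty$. Hence the multiplicativity of the leading-term map on the appropriate Puiseux algebra $\hat L_-^{(m)}$ (see \cite{GGV}*{Section~1}) gives
\[
\ell_{\rho,\sigma}(P)=\ell_{\rho,\sigma}(u)\prod_{i=1}^{n}\ell_{\rho,\sigma}(y-\eta_i).
\]

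Next I would exploit that each $\eta_i$ involves only powers of $x$, so $\ell_{\rho,\sigma}(\eta_i)$ is a single monomial, say $\mu_i x^{s_i}$ with $\rho s_i=v_{\rho,\sigma}(\eta_i)$ when $\eta_i\neq0$. Comparing $v_{\rho,\sigma}(\eta_i)$ with $v_{\rho,\sigma}(y)=\sigma$ shows that $\ell_{\rho,\sigma}(y-\eta_i)$ equals $y$ if $\eta_i=0$ or $v_{\rho,\sigma}(\eta_i)<\sigma$, equals $-\mu_i x^{s_i}$ if $v_{\rho,\sigma}(\eta_i)>\sigma$, and equals $y-\mu_i x^{\sigma/\rho}$ (so $s_i=\sigma/\rho$ and $\mu_i\in K^{\times}$) precisely when $v_{\rho,\sigma}(\eta_i)=\sigma$; in all other cases it is a monomial, and $\ell_{\rho,\sigma}(u)$ is a monomial as well. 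Since $(\rho,\sigma)\in\Dir(P)$ forces $\#\Supp(\ell_{\rho,\sigma}(P))>1$, the left side of the displayed identity is not a monomial, while a product of monomials is a monomial; therefore at least one factor $\ell_{\rho,\sigma}(y-\eta_i)$ fails to be a monomial. Fix such an index $i$, set $\eta\coloneqq\eta_i$ and $\lambda\coloneqq\mu_i\in K^{\times}$: then $\ev_{\eta}(P)=0$ because $y-\eta$ divides $P$, and $y-\lambda x^{\sigma/\rho}=\ell_{\rho,\sigma}(y-\eta)$ divides $\ell_{\rho,\sigma}(P)$ by the displayed identity.

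It remains to pin down the shape of $\eta$. Writing $\eta=\sum_{t}\lambda_t x^{t/r}$, the equality $\ell_{\rho,\sigma}(\eta)=\lambda x^{\sigma/\rho}$ says that $\sigma/\rho$ lies in the support of $\eta$ (so it is of the form $t_0/r$ with $\lambda_{t_0}=\lambda$) and that every other exponent $t/r$ of $\eta$ satisfies $\rho(t/r)<\sigma$, i.e. $t\rho<r\sigma$; this is exactly $\eta=\lambda x^{\sigma/\rho}+\sum_{t\rho<r\sigma}\lambda_t x^{t/r}$, and $\eta\in K((x^{-\sgn(\rho)/r}))$ by our choice of ground field. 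The only steps that need genuine care are the finiteness of all the $(\rho,\sigma)$-degrees appearing above — this is precisely where the sign of $\rho$, hence the choice between $K((x^{-1/r}))$ and $K((x^{1/r}))$, is essential — and the multiplicativity of $\ell_{\rho,\sigma}$; both are routine given the conventions set up above and in \cite{GGV}*{Section~1}. (Alternatively, the ``moreover'' assertion can be obtained directly by dehomogenizing $\ell_{\rho,\sigma}(P)$ via $y=zx^{\sigma/\rho}$ into a one-variable polynomial with at least two terms and taking a nonzero root.)
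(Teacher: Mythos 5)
Your proposal is correct and follows essentially the same route as the paper: factor $P$ via Proposition~\ref{puiseux}, use multiplicativity of $\ell_{\rho,\sigma}$, observe that $\ell_{\rho,\sigma}(u)$ and all but at least one factor are monomials because $(\rho,\sigma)\in\Dir(P)$, and take $\eta$ to be the root whose factor has the two-term leading form $y-\lambda x^{\sigma/\rho}$. You merely spell out the case analysis and the shape of $\eta$ that the paper compresses into ``the result follows immediately taking $\eta=\eta_{i_0}$'' (the only blemish is the harmless slip writing $\hat L_-^{(m)}$ where the paper works in $\hat L_-^{(r)}$).
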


\begin{proof} We assume that $\rho>0$ and leave the case $\rho<0$, which is similar, to the reader. Let $n$, $r$, $u$ and $\eta_i$ be as in Proposition~\ref{puiseux}. Set $P_i\coloneqq y-\eta_i\in\hat L^{(r)}_-$. Since $\ell_{\rho,\sigma}$ is multiplicative, from~\eqref{producto} we obtain
$$
\ell_{\rho,\sigma}(P)=\ell_{\rho,\sigma}(u) \prod_{i=1}^{n}\ell_{\rho,\sigma}(P_i).
$$
Note that $\ell_{\rho,\sigma}(u)$ is a monomial, because all the monomials in $x^{\pm 1/r}$ have different $(\rho,\sigma)$-degrees. Since $(\rho,\sigma)\in \Dir(P)$, there exists at least one $i$, say $i_0$, such that the factor $\ell_{\rho,\sigma}(P_{i_0})$ has two terms, which necessarily are $y$ and $\lambda x^{\sigma/\rho}$ for some $\lambda\in K^{\times}$.  The result follows immediately taking $\eta=\eta_{i_0}$.
\end{proof}

\begin{theorem}\label{cassier nouges generalizado} Let $P,Q\in L^{(l)}$. Set
$$
\tilde J=\tilde J(P,Q)\coloneqq \Coef_{x^{-1}y^0}(QD_x(P))x^{-1}+\int_y [P,Q],
$$
and write $g\coloneqq \tilde J-Q D_x(P)$. The following facts hold:
\begin{enumerate}

\smallskip

\item $D_y(g)=-D_x(Q D_y(P))$,

\smallskip

\item $g\in \Ima(D_x)$,

\smallskip

\item If $(\rho,\sigma)\in \Dir(P)$ with $\rho\ne 0$ and $v_{\rho,\sigma}(\tilde J)=v_{\rho,\sigma}(x^{-1})$, then there is $\lambda\in K^{\times}$ such that
$$
y-\lambda x^{\sigma/\rho}\mid \ell_{\rho,\sigma}(P)\quad\text{and}\quad \ev_{\lambda x^{\sigma/\rho}}(\ell_{\rho,\sigma}(\tilde J))=0.
$$
\end{enumerate}
\end{theorem}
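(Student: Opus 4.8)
The plan is to prove the three items in order. Items (1) and (2) are formal identities, while (3) is where the work lies and combines Proposition~\ref{Newton Puiseux}, Lemma~\ref{forma exacta}, and the transfer identities~\eqref{evaluacion en el borde} and~\eqref{derivada total}. For (1) I would simply differentiate: $D_y$ annihilates $\Coef_{x^{-1}y^0}(QD_x(P))x^{-1}$ and $D_y\circ\int_y=\ide$, so $D_y(\tilde J)=[P,Q]=D_x(P)D_y(Q)-D_x(Q)D_y(P)$; since $D_x$ and $D_y$ commute, $D_y(QD_x(P))=D_y(Q)D_x(P)+QD_xD_y(P)$, and subtracting and regrouping leaves exactly $D_y(g)=-D_x(Q)D_y(P)-QD_xD_y(P)=-D_x(QD_y(P))$.

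For (2), by the description of $\Ima(D_x)$ it suffices to check that $\Coef_{x^{-1}y^j}(g)=0$ for every $j\ge 0$. For $j\ge 1$ this follows from (1): the right-hand side $-D_x(QD_y(P))$ lies in $\Ima(D_x)$, so $\Coef_{x^{-1}y^{j-1}}(D_y(g))=0$, and since $\Coef_{x^{-1}y^{j-1}}(D_y(g))=j\,\Coef_{x^{-1}y^{j}}(g)$ we get $\Coef_{x^{-1}y^{j}}(g)=0$. For $j=0$ I would instead use that $\int_y[P,Q]$ has no term in $y^0$, so $\Coef_{x^{-1}y^0}(\tilde J)=\Coef_{x^{-1}y^0}(QD_x(P))$, whence $\Coef_{x^{-1}y^0}(g)=\Coef_{x^{-1}y^0}(\tilde J-QD_x(P))=0$.

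For (3) I would assume $\rho>0$ (the case $\rho<0$ being the mirror image, with $\hat L_+^{(l)}$, $K((x^{1/r}))$ in place of $\hat L_-^{(l)}$, $K((x^{-1/r}))$, and the evident analogue of Lemma~\ref{forma exacta}). Proposition~\ref{Newton Puiseux} provides $r\in\mathds{N}$, $\lambda\in K^{\times}$ and $\eta=\lambda x^{\sigma/\rho}+\sum_{t\rho<r\sigma}\lambda_t x^{t/r}$ with $\ev_\eta(P)=0$ and $y-\lambda x^{\sigma/\rho}\mid\ell_{\rho,\sigma}(P)$; this is already the first assertion. By (1) and (2), Lemma~\ref{forma exacta} applies with $f\coloneqq-QD_y(P)$, producing $H\in\hat L_-^{(l)}$ with $D_x(H)=g$ and $D_y(H)=-QD_y(P)$. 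Now applying~\eqref{derivada total} to $H$ and to $P$ (the latter together with $\ev_\eta(P)=0$, which gives $\ev_\eta(D_x(P))=-\ev_\eta(D_y(P))D_x(\eta)$), and using $\tilde J=g+QD_x(P)$, one obtains
\begin{equation*}
D_x(\ev_\eta(H))=\ev_\eta(D_x(H))+\ev_\eta(D_y(H))D_x(\eta)=\ev_\eta(g)+\ev_\eta(Q)\ev_\eta(D_x(P))=\ev_\eta(\tilde J).
\end{equation*}
In particular $\ev_\eta(\tilde J)\in\Ima(D_x)$, so $\Coef_{x^{-1}}(\ev_\eta(\tilde J))=0$.

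To finish I would argue by contradiction: suppose $\ev_{\lambda x^{\sigma/\rho}}(\ell_{\rho,\sigma}(\tilde J))\ne 0$. Each monomial $ax^{i/l}y^j$ of $\ell_{\rho,\sigma}(\tilde J)$ satisfies $\rho\,i/l+\sigma j=v_{\rho,\sigma}(\tilde J)=-\rho$, hence $i/l+j\sigma/\rho=-1$, so $y\mapsto\lambda x^{\sigma/\rho}$ turns it into a scalar multiple of $x^{-1}$; thus $\ev_{\lambda x^{\sigma/\rho}}(\ell_{\rho,\sigma}(\tilde J))=c\,x^{-1}$ for some $c\in K^{\times}$. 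Since $v_{\rho,\sigma}(\eta)=\sigma=v_{\rho,\sigma}(y)$ and $\ell_{\rho,\sigma}(\eta)=\lambda x^{\sigma/\rho}$, identity~\eqref{evaluacion en el borde} (whose left-hand side is nonzero) gives $\ell_{\rho,\sigma}(\ev_\eta(\tilde J))=c\,x^{-1}$, so $\Coef_{x^{-1}}(\ev_\eta(\tilde J))=c\ne 0$, contradicting the previous paragraph; hence $\ev_{\lambda x^{\sigma/\rho}}(\ell_{\rho,\sigma}(\tilde J))=0$. The hard part is the identity $D_x(\ev_\eta(H))=\ev_\eta(\tilde J)$: once $\tilde J$ is recognized, via the potential $H$ coming from Lemma~\ref{forma exacta}, as $D_x$ of a series evaluated along $\eta$, the principle that $D_x$ never produces an $x^{-1}$ term does the rest; getting there requires feeding $\ev_\eta(P)=0$ into~\eqref{derivada total} in precisely the right way.
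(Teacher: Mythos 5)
Your proof is correct and follows essentially the same route as the paper: items (1) and (2) by direct manipulation of coefficients, and item (3) via the Newton--Puiseux root $\eta$ from Proposition~\ref{Newton Puiseux}, the potential $H$ supplied by Lemma~\ref{forma exacta}, the identity~\eqref{derivada total} combined with $\ev_\eta(P)=0$ to get $D_x(\ev_\eta(H))=\ev_\eta(\tilde J)$, and the vanishing of the $x^{-1}$-coefficient of anything in $\Ima(D_x)$. The only differences are cosmetic: you verify (2) coefficientwise instead of splitting off $\int_y D_y(g)$, and you phrase the last step of (3) as a contradiction via~\eqref{evaluacion en el borde}, which the paper states as a direct computation.
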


\begin{proof} Statement~(1) follows directly from the fact that $D_y(\tilde J)=[P,Q]$. From~(1) it follows that
$$
\int_y D_y(g)=D_x\left(-\int_y Q D_y(P)\right)\in \Ima(D_x).
$$
Therefore, in order to prove~(2), it suffices to verify that $g-\int_y D_y(g)\in \Ima(D_x)$. But this is true since $g-\int_y D_y(g)\in
K[x^{1/l},x^{-1/l}]$ and
$$
\Coef_{x^{-1}y^0}\left(g-\int_y D_y(g)\right)=\Coef_{x^{-1}y^0}\left(\tilde J-Q D_x(P) \right)=\Coef_{x^{-1}y^0}\left(\int_y [P,Q]\right)=0.
$$
Now we prove statement~(3). It is convenient to consider separately the cases $\rho>0$ and $\rho<0$. We only deal with the case $\rho>0$, since the other one is similar. By Lemma~\ref{forma exacta}, statements~(1) and~(2) guarantee that the differential form
$$
(\tilde J -Q D_x(P))dx-(Q D_y(P))dy
$$
is exact. So, $\tilde J -Q D_x(P)=D_x(H) $ and $-Q D_y(P)=D_y(H)$ for some $H\in \hat L^{(l)}_-$. Let $\eta$ and $\lambda$ be as in Proposition~\ref{Newton Puiseux}, so that $y-\lambda x^{\sigma/\rho}$ divides $\ell_{\rho,\sigma}(P)$. We will prove that $\ev_{\lambda x^{\sigma/\rho}}(\ell_{\rho,\sigma}(\tilde J)) =0$. A direct computation shows that
$$
\ev_{\lambda x^{\sigma/\rho}}(\ell_{\rho,\sigma}(\tilde J))=\Coef_{x^{-1}y^0}(\ev_{\eta}(\tilde J))x^{-1}.
$$
Now by equality~\eqref{derivada total},
\begin{align*}
D_x(\ev_{\eta}(H)) &=\ev_{\eta}(D_x(H))+\ev_{\eta}(D_y(H))D_x(\eta)\\
&=\ev_{\eta}(\tilde J)-\ev_{\eta}(Q)\bigl(\ev_{\eta}(D_x(P))+\ev_{\eta}(D_y(P))D_x(\eta)\bigr)\\
&=\ev_{\eta}(\tilde J)-\ev_{\eta}(Q)D_x(\ev_\eta(P)).
\end{align*}
Since $\ev_{\eta}(P)=0$, we arrive at $D_x(\ev_{\eta}(H)) = \ev_{\eta}(\tilde J)$, and so $\Coef_{x^{-1}y^0}(\ev_{\eta}(\tilde J))=0$.
\end{proof}

\begin{corollary}\label{cassier nouges} If $P,Q\in L$ and $[P,Q]=\mu\in K^{\times}$, then there is no edge of the Newton polygon of $P$ with slope~$1$.
\end{corollary}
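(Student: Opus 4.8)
The plan is to obtain a contradiction from part~(3) of Theorem~\ref{cassier nouges generalizado}. Assume the Newton polygon of $P$ has an edge $E$ with slope~$1$. Such an edge lies either on the lower-right or on the upper-left part of the boundary, so the primitive direction normal to $E$ is $(\rho,\sigma)=(1,-1)$ or $(\rho,\sigma)=(-1,1)$; in both cases $\rho\ne 0$ and $\sigma/\rho=-1$. Since $E$ carries at least two points of $\Supp(P)$ and $P$ is a polynomial (so $v_{\rho,\sigma}(P)<\infty$), we get $\#\Supp(\ell_{\rho,\sigma}(P))>1$, hence $(\rho,\sigma)\in\Dir(P)$. (If the Newton polygon is taken to be the convex hull of $\Supp(P)\cup\{(0,0)\}$, the only borderline situation is an edge through the origin, which is excluded by the shape restrictions already known for components of a Jacobian pair, or can be treated by hand.)

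The next step is to compute $\tilde J=\tilde J(P,Q)$. Because $P,Q\in L=K[x,y]$, the product $QD_x(P)=Q\,\partial_x P$ is an honest polynomial and so has no monomial $x^{-1}y^0$; thus $\Coef_{x^{-1}y^0}(QD_x(P))=0$. Since $[P,Q]=\mu\in K^{\times}$ we have $\int_y[P,Q]=\mu y$. Therefore $\tilde J=\mu y$. This element is $(\rho,\sigma)$-homogeneous with $v_{\rho,\sigma}(\tilde J)=\sigma=-\rho=v_{\rho,\sigma}(x^{-1})$, so the hypothesis $v_{\rho,\sigma}(\tilde J)=v_{\rho,\sigma}(x^{-1})$ of Theorem~\ref{cassier nouges generalizado}(3) is satisfied.

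Applying Theorem~\ref{cassier nouges generalizado}(3) then yields $\lambda\in K^{\times}$ with $\ev_{\lambda x^{\sigma/\rho}}(\ell_{\rho,\sigma}(\tilde J))=0$. But $\ell_{\rho,\sigma}(\tilde J)=\tilde J=\mu y$, whence $\ev_{\lambda x^{\sigma/\rho}}(\ell_{\rho,\sigma}(\tilde J))=\mu\lambda x^{\sigma/\rho}$, which is nonzero because $\mu,\lambda\in K^{\times}$. This contradiction finishes the proof.

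Once Theorem~\ref{cassier nouges generalizado} is available, the argument is essentially immediate from the identity $\tilde J=\mu y$, so there is no serious obstacle. The only points deserving care are the bookkeeping identifying the normal of a slope-$1$ edge and verifying $(\rho,\sigma)\in\Dir(P)$ (including the harmless edge-through-the-origin case), and checking that both sign choices $(1,-1)$ and $(-1,1)$ satisfy the degree hypothesis — which they do, since $v_{\rho,\sigma}(x^{-1})=-\rho$ and $v_{\rho,\sigma}(y)=\sigma$ agree for each of them.
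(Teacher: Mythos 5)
Your proposal is correct and follows essentially the same route as the paper: compute $\tilde J=\mu y$ and contradict the vanishing $\ev_{\lambda x^{\sigma/\rho}}(\ell_{\rho,\sigma}(\tilde J))=0$ forced by Theorem~\ref{cassier nouges generalizado}(3) for the directions $(1,-1)$ and $(-1,1)$. Your extra checks (that $(\rho,\sigma)\in\Dir(P)$, that $\Coef_{x^{-1}y^0}(QD_x(P))=0$, and that $v_{\rho,\sigma}(\tilde J)=v_{\rho,\sigma}(x^{-1})$) are details the paper leaves implicit, not a different argument.
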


\begin{proof} Let $\tilde J$ be as in Theorem~\ref{cassier nouges generalizado}. Since $P,Q\in L$ and $[P,Q]=\mu$, we have $\tilde J=\mu y$. Then,
$$
\ev_{\lambda x^{-1}}(\ell_{-1,1}(\tilde J))=\ev_{\lambda x^{-1}}(\ell_{1,-1}(\tilde J))=\lambda \mu x^{-1}\ne 0\qquad\text{for each $\lambda\ne 0$.}
$$
But, by Theorem~\ref{cassier nouges generalizado}(3), if there is an edge with slope one, then there exists $\lambda\ne0$, such that
$$
\ev_{\lambda x^{-1}}(\ell_{-1,1}(\tilde J))=\ev_{\lambda x^{-1}}(\ell_{1,-1}(\tilde J))= 0,
$$
a contradiction.
\end{proof}

\section{Lower edges}\label{lower edges}
\setcounter{equation}{0}

Assume that the Jacobian Conjecture is false. Let $(P,Q)$, $m$ and $n$ be as in~\cite{GGV}*{Corollary~5.21}. In particular, $(P,Q)$ is a minimal pair and a standard $(m,n)$-pair in $L$ (see the beginning of Section~4 of~\cite{GGV} and~\cite{GGV}*{Definition~4.3}). In that paper we study the possible edges of the convex hull of the support of $P$ with a corner above the main diagonal. Here we begin the study of the lower part. In order to carry out this task, we proceed as follows: Consider $(a,b)\in\mathds{N}_0\times \mathds{N}_0$ with $a>b>0$ and suppose that there exists $(\rho,\sigma)\in \Dir(P) \cap \hspace{0.7pt}](0,-1),(1,0)[$ satisfying
$$
(a,b)=\frac 1m\en_{\rho,\sigma}(P).
$$
For such $(a,b)$ we are going to prove that there exist $(\rho,\sigma)$-homogeneous elements $G,R\in L$ such that $R$ is not a monomial,
$$
(a,b)=\en_{\rho,\sigma}(R)\quad\text{and}\quad [G,R]= R^i\quad\text{for some $i\ge 2$}.
$$
This allows to discard as possible $(a,b)$ all the points for which such $G, R$ do not exist.

\begin{proposition}\label{restriccion de direcciones} Let $(P,Q)$ be a Jacobian pair in $L$, $(\rho,\sigma)\in \Dir(P)$ and
$(a',b')\coloneqq \en_{\rho,\sigma}(P)$. As\-sume that $a'>b'>0$ and $(0,-1)<(\rho,\sigma)<(1,0)$. Then $(\rho,\sigma)<(1,-1)$.
\end{proposition}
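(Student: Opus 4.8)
The plan is to argue by contradiction: suppose that $(\rho,\sigma)\ge(1,-1)$, i.e. $(1,-1)\le(\rho,\sigma)<(1,0)$, and derive a violation of Corollary~\ref{cassier nouges}, which forbids a slope-$1$ edge in the Newton polygon of $P$. The key geometric observation is that for directions $(\rho,\sigma)$ lying in the half-open interval $[(1,-1),(1,0)[$, the endpoint $\en_{\rho,\sigma}(P)$ is pinned to lie weakly above the main diagonal $y=x$ (or at least cannot satisfy $a'>b'$). Concretely, $\en_{\rho,\sigma}(P)$ is the vertex of $\HH(P)$ that maximizes the linear functional $(i,j)\mapsto \rho i+\sigma j$; as $(\rho,\sigma)$ sweeps from $(1,-1)$ toward $(1,0)$, this vertex sweeps along the lower-right boundary of $\HH(P)$, and the first such vertex (at direction exactly $(1,-1)$) is precisely the endpoint of the slope-$1$ supporting line.

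First I would recall from \cite{GGV}*{Corollary 5.21} (or the setup at the start of this section) the basic facts about the shape of $\HH(P)$ for a standard minimal pair: it contains the origin and the point $(m,0)$ on the $x$-axis, and its support is confined to the first quadrant with the relevant corner structure. Then I would examine what $\en_{\rho,\sigma}(P)$ can be for $(\rho,\sigma)$ in $[(1,-1),(1,0)[$. For $(\rho,\sigma)=(1,0)$ itself the maximizing vertex has $j$-coordinate as large as possible among points of maximal $x$-degree — but $(1,0)\notin\Dir(P)$ is excluded by hypothesis. For $(\rho,\sigma)=(1,-1)$, the maximizing functional is $i-j$, and $\en_{1,-1}(P)$ is the vertex achieving the maximum of $i-j$; the value of this maximum is the $x$-intercept minus something, and crucially the supporting line $i-j=v_{1,-1}(P)$ is exactly a candidate slope-$1$ edge. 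If $\en_{1,-1}(P)=(a',b')$ with $a'>b'>0$, then $(1,-1)\in\Dir(P)$ would give us two distinct monomials on that supporting line (by definition of $\Dir$), hence an honest edge of slope~$1$, contradicting Corollary~\ref{cassier nouges}. For directions strictly between $(1,-1)$ and $(1,0)$, I would show the maximizing vertex has $i$-coordinate equal to $\deg_x(P)$ and hence (using that $P\in L$ and the standard-pair normalization) lies on or above the diagonal, again contradicting $a'>b'$.

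The main obstacle I anticipate is making the second case — directions strictly inside $](1,-1),(1,0)[$ — fully rigorous, since there one cannot directly invoke Cassier--Nouges (the edge has slope strictly greater than $1$, which is permitted). Here the argument must instead be purely combinatorial, using that $(\rho,\sigma)\in\Dir(P)$ forces $\en_{\rho,\sigma}(P)$ to be an actual vertex of $\HH(P)$ sitting between the slope-$(-1/\text{something})$ lower edges; one needs the precise description of the lower-right boundary of $\HH(P)$ from \cite{GGV}. I would organize this by noting that if $(1,-1)<(\rho,\sigma)<(1,0)$ then $v_{1,0}(\en_{\rho,\sigma}(P))=v_{1,0}(P)$, i.e. $\en_{\rho,\sigma}(P)$ has maximal $x$-coordinate $a'=\deg_x(P)/m$; then since $(m,0)$ or an appropriate boundary point sits at the bottom of the rightmost fiber and $P$ has no edge of slope $1$, the point with maximal $x$-coordinate and maximal $(\rho,\sigma)$-degree must have $j$-coordinate $0$, whence $b'=0$, contradicting $b'>0$. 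Assembling these two cases closes the argument; the delicate point throughout is correctly translating between "$\en_{\rho,\sigma}(P)$" and "supporting line / vertex of $\HH(P)$" and invoking the right normalization lemma from \cite{GGV}.
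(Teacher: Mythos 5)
Your first case (excluding $(\rho,\sigma)=(1,-1)$ via Corollary~\ref{cassier nouges}) is exactly what the paper does. But the second case, which you yourself flag as the main obstacle, has a genuine gap, and it is precisely where the real content of the proposition lies. Your proposed combinatorial argument rests on the claim that for $(\rho,\sigma)$ strictly between $(1,-1)$ and $(1,0)$ one has $v_{1,0}(\en_{\rho,\sigma}(P))=v_{1,0}(P)$, i.e.\ that the upper endpoint of an edge of slope $>1$ on the lower-right boundary is the vertex of maximal $x$-degree. That is false in general: the boundary can contain several edges of slope $>1$ between such an edge and the rightmost vertex (already $\Supp(P)\supseteq\{(4,0),(5,2),(7,9),\dots\}$ gives an edge of direction $(2,-1)$ whose endpoint $(5,2)$ is not of maximal $x$-degree). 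And even granting it, the further step "hence the endpoint lies on or above the diagonal / has $j$-coordinate $0$" is exactly the kind of restriction on below-diagonal vertices that the proposition is meant to establish; you cannot extract it from "no slope-$1$ edge" plus convexity alone, since for a general polynomial (e.g.\ $x^5y^2+x^4$) an edge of direction $(2,-1)$ with endpoint $(5,2)$, $5>2>0$, is perfectly possible. So some algebraic input from the Jacobian condition beyond Corollary~\ref{cassier nouges} is indispensable in this range of directions; also note the proposition is stated for an arbitrary Jacobian pair, so the shape facts from \cite{GGV}*{Corollary 5.21} you invoke are not among the hypotheses.

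The paper closes this case quite differently: since $\rho>-\sigma>0$ and $v_{\rho,\sigma}(P)\ge\rho a'+\sigma b'>(\rho+\sigma)b'\ge\rho+\sigma>0$, it applies \cite{GGV}*{Theorem 2.6} to produce a non-monomial $(\rho,\sigma)$-homogeneous $F$ with $[F,\ell_{\rho,\sigma}(P)]=\ell_{\rho,\sigma}(P)$, $v_{\rho,\sigma}(F)=\rho+\sigma$, and either $\en_{\rho,\sigma}(F)\sim(a',b')$ or $\en_{\rho,\sigma}(F)=(1,1)$. The first alternative is killed by integrality ($\lambda b'=v_{0,1}(\en_{\rho,\sigma}(F))\in\mathds{Z}$ would have to lie strictly between $0$ and $1$), and the second by the sign of $\rho+\sigma$ (it forces $\st_{\rho,\sigma}(F)=(k,0)$ with $\rho+\sigma=\rho k\ge\rho>\rho+\sigma$). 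To repair your write-up you would need to import this (or an equivalent) algebraic lemma; the purely geometric route you sketch cannot be made to work.
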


\begin{proof} By Corollary~\ref{cassier nouges} we know that $(\rho,\sigma)\ne (1,-1)$. Suppose that $(\rho,\sigma) \in \hspace{2pt}](1,-1),(1,0)[ \hspace{2pt}$, which means that $\rho>-\sigma>0$. Since
$$
(\rho,\sigma)\in \mathfrak{V}_{>0} \quad\text{and}\quad v_{\rho,\sigma}(P)\ge v_{\rho,\sigma}(a',b') = \rho a'+\sigma b'>(\rho +\sigma) b'\ge \rho +\sigma >0,
$$
we can apply~\cite{GGV}*{Theorem~2.6}. Hence, there exists a $(\rho,\sigma)$-homogeneous polynomial $F$ such that
$$
(a',b') \sim \en_{\rho,\sigma}(F)\quad\text{or}\quad \en_{\rho,\sigma}(F) = (1,1),\quad v_{\rho,\sigma}(F) = \rho + \sigma\quad\text{and}\quad [F,\ell_{\rho,\sigma}(P)]= \ell_{\rho,\sigma}(P).
$$
Moreover, by~\cite{GGV}*{Remark~2.5} we know that $F$ is not a monomial. If $(a',b')\sim \en_{\rho,\sigma}(F)$, then there exists $\lambda>0$ such that $\en_{\rho,\sigma}(F) =\lambda (a',b')$. So
$$
\rho + \sigma = v_{\rho,\sigma}(F) = \rho \lambda a' + \lambda\sigma b' > \lambda b'(\rho+\sigma) \Longrightarrow 0<\lambda b' < 1,
$$
which is impossible, since $\lambda b' = v_{0,1}(\en_{\rho,\sigma}(F))\in \mathds{Z}$. Consequently, $\en_{\rho,\sigma}(F) = (1,1)$, and thus, by~\cite{GGV}*{Remark~1.8}, we have
$$
v_{0,1}(\st_{\rho,\sigma}(F)) < v_{0,1}(\en_{\rho,\sigma}(F)) = 1.
$$
Therefore $\st_{\rho,\sigma}(F) = (k,0)$ for some $k\in \mathds{Z}$, and so $\rho + \sigma = v_{\rho,\sigma}(\st_{\rho,\sigma}(F)) = \rho k$ which implies that $k>0$. But this leads to the contradiction  $\rho + \sigma = \rho k \ge \rho > \rho + \sigma$ and finishes the proof.
\end{proof}

In the rest of this section we assume that $(P,Q)$ is an $(m,n)$-pair in $L$ and we fix a direction $(\rho,\sigma)\in\hspace{2pt} ](0,-1),(1,-1)[$ of $P$ such that the point $(a,b)\coloneqq \frac 1m\en_{\rho,\sigma}(P)$ satisfies the in\-equal\-ities $a>b>0$.

\begin{proposition}\label{existe R} The valuations $v_{\rho,\sigma}(P)$ and $v_{\rho,\sigma}(Q)$ are greater than zero, and there exist a $(\rho,\sigma)$-homogeneous po\-ly\-nomial $R$ and $\lambda_P,\lambda_Q\in K^{\times}$ such that
\begin{equation}
\ell_{\rho,\sigma}(P) = \lambda_P R^m\quad\text{and}\quad \ell_{\rho,\sigma}(Q) = \lambda_Q R^n.\label{potencias de R}
\end{equation}
\end{proposition}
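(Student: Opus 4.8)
The plan is to derive the statement in three steps: first show $v_{\rho,\sigma}(P)>0$ and $v_{\rho,\sigma}(Q)>0$, then show $[\ell_{\rho,\sigma}(P),\ell_{\rho,\sigma}(Q)]=0$, and finally extract $R$ from this vanishing bracket, the decisive point being $\gcd(m,n)=1$. For the first step I would use that, for a standard $(m,n)$-pair, the Newton polygon of $Q$ is $\frac nm$ times that of $P$ (part of the description of standard pairs in~\cite{GGV}); hence $\en_{\rho,\sigma}(Q)=\frac nm\en_{\rho,\sigma}(P)=(na,nb)$, $\st_{\rho,\sigma}(Q)=\frac nm\st_{\rho,\sigma}(P)$, and $v_{\rho,\sigma}(Q)=\frac nm\,v_{\rho,\sigma}(P)$, so everything reduces to $v_{\rho,\sigma}(P)=m(\rho a+\sigma b)>0$. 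This last inequality I would read off from the known shape of $\HH(P)$ for a standard pair: the edge of $\HH(P)$ in a direction $(\rho,\sigma)\in\,](0,-1),(1,-1)[$ lies strictly below the line $\rho i+\sigma j=0$ through the origin. This is the only place where the argument leans on external structural input about standard pairs.

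For the second step, observe that $[\ell_{\rho,\sigma}(P),\ell_{\rho,\sigma}(Q)]$ is either $0$ or $(\rho,\sigma)$-homogeneous of degree $v_{\rho,\sigma}(P)+v_{\rho,\sigma}(Q)-\rho-\sigma$. Since $(\rho,\sigma)\in\,](0,-1),(1,-1)[$ forces $\rho+\sigma<0$, the first step makes this degree strictly positive. Writing $P=\ell_{\rho,\sigma}(P)+P_-$ and $Q=\ell_{\rho,\sigma}(Q)+Q_-$ with $v_{\rho,\sigma}(P_-)<v_{\rho,\sigma}(P)$ and $v_{\rho,\sigma}(Q_-)<v_{\rho,\sigma}(Q)$, each of $[\ell_{\rho,\sigma}(P),Q_-]$, $[P_-,\ell_{\rho,\sigma}(Q)]$ and $[P_-,Q_-]$ has strictly smaller $(\rho,\sigma)$-degree, so $[\ell_{\rho,\sigma}(P),\ell_{\rho,\sigma}(Q)]$ is exactly the homogeneous component of $[P,Q]$ of degree $v_{\rho,\sigma}(P)+v_{\rho,\sigma}(Q)-\rho-\sigma$. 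As $[P,Q]\in K^{\times}$ is homogeneous of degree $0$ and that degree is positive, we get $[\ell_{\rho,\sigma}(P),\ell_{\rho,\sigma}(Q)]=0$.

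For the third step, put $(\alpha,\beta):=\st_{\rho,\sigma}(P)$, $(\gamma,\delta):=\st_{\rho,\sigma}(Q)$, and let $z:=x^{-\sigma}y^{\rho}$ be the (primitive) monomial of $(\rho,\sigma)$-degree $0$ with non-negative exponents. The monomials occurring in $\ell_{\rho,\sigma}(P)$ fill the progression $(\alpha,\beta)+j(-\sigma,\rho)$, $0\le j\le \deg f$, so over $\ov K$ we may write $\ell_{\rho,\sigma}(P)=x^{\alpha}y^{\beta}f(z)$ with $f(0)\ne0$ and $\deg f\ge1$ (the latter since $(\rho,\sigma)\in\Dir(P)$), and likewise $\ell_{\rho,\sigma}(Q)=x^{\gamma}y^{\delta}g(z)$ with $g(0)\ne0$. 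Differentiating these expressions and using $\rho\alpha+\sigma\beta=v_{\rho,\sigma}(P)=:d_P$ and $\rho\gamma+\sigma\delta=v_{\rho,\sigma}(Q)=:d_Q$, the relation $[\ell_{\rho,\sigma}(P),\ell_{\rho,\sigma}(Q)]=0$ turns into
$$
(\alpha\delta-\beta\gamma)\,fg+d_P\,zfg'-d_Q\,zf'g=0 .
$$
Evaluating at $z=0$ yields $\alpha\delta=\beta\gamma$ (since $f(0)g(0)\ne0$), and what remains is $d_P\,fg'=d_Q\,f'g$, i.e. $\bigl(g^{d_P}/f^{d_Q}\bigr)'=0$, so $g^{d_P}=c\,f^{d_Q}$ for some $c\in\ov K^{\times}$. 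Since $d_P=mk$ and $d_Q=nk$ with $k:=\rho a+\sigma b>0$, comparing the multiplicities of the irreducible factors and using $\gcd(m,n)=1$ together with unique factorisation in $\ov K[z]$ produces a (non-constant) $R_1\in\ov K[z]$ with $f=\lambda_P R_1^{m}$ and $g=\lambda_Q R_1^{n}$. Finally, $\st_{\rho,\sigma}(Q)=\frac nm\st_{\rho,\sigma}(P)$ and $\gcd(m,n)=1$ force $m\mid\alpha$ and $m\mid\beta$, so $R:=x^{\alpha/m}y^{\beta/m}R_1(z)$ is a $(\rho,\sigma)$-homogeneous element — not a monomial — satisfying $\ell_{\rho,\sigma}(P)=\lambda_P R^{m}$ and $\ell_{\rho,\sigma}(Q)=\lambda_Q R^{n}$; picking $s,t\in\mathds{Z}$ with $sm+tn=1$ shows $R$ equals, up to a scalar, $\ell_{\rho,\sigma}(P)^{s}\ell_{\rho,\sigma}(Q)^{t}\in K(x,y)$, hence may be taken in $K[x,y]$, and then $\lambda_P,\lambda_Q\in K^{\times}$. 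The main obstacle is the first step: matching the ``lower-side'' direction range $](0,-1),(1,-1)[$ to the structural facts on standard pairs in~\cite{GGV} (the homothety $\HH(Q)=\frac nm\HH(P)$ and the positivity of $v_{\rho,\sigma}(P)$); granting these, the second and third steps are a routine degree count and the familiar computation that commuting homogeneous elements are powers of a common one.
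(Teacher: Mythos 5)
Your proof is essentially correct, but it takes a more self-contained route than the paper, which disposes of the statement almost entirely by citation. For positivity, the paper uses \cite{vdE}*{Theorem~10.2.6}: since $t_x(P)\coloneqq\max\{0,\deg_x(P(0,x))\}>0$, the support of $P$ contains a point $(t,0)$ with $t>0$, whence $v_{\rho,\sigma}(P)\ge\rho t>0$ because $\rho>0$ on $](0,-1),(1,-1)[$, and similarly for $Q$; your appeal to ``the known shape of $\HH(P)$'' (the lower edge lying strictly below the line $\rho i+\sigma j=0$) is exactly this positivity restated, so it would be cleaner to anchor it on that theorem rather than assert it as known — as phrased it comes close to assuming the one nontrivial inequality of the first step. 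Your second step, the degree count showing $[\ell_{\rho,\sigma}(P),\ell_{\rho,\sigma}(Q)]=0$ because the would-be leading component of $[P,Q]=\mu\in K^\times$ has degree $v_{\rho,\sigma}(P)+v_{\rho,\sigma}(Q)-\rho-\sigma>0$, is precisely the content of \cite{GGV}*{Proposition~1.13}, which the paper simply invokes. Your third step re-proves \cite{GGV}*{Proposition~2.1(2b)}: the paper gets the ratio $v_{\rho,\sigma}(P)/v_{\rho,\sigma}(Q)=m/n$ from \cite{vdE}*{Theorem~10.2.1} and then cites that proposition, whereas you derive the identity $(\alpha\delta-\beta\gamma)fg+d_Pzfg'-d_Qzf'g=0$, conclude $g^{d_P}=cf^{d_Q}$, and extract $R_1$ by unique factorization and $\gcd(m,n)=1$ — a correct computation (it is the standard proof of that proposition, and the same kind of manipulation reappears in the paper's Lemma~\ref{factores en jacobiano}). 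Because you build $R$ by hand you need the extra observations that $m\mid\st_{\rho,\sigma}(P)$ componentwise (which you correctly obtain from the Newton-polygon homothety $\HH(Q)=\frac nm\HH(P)$ together with $\gcd(m,n)=1$) and the descent from $\ov K$ to $K$ via $sm+tn=1$; both are handled adequately, and both are packaged invisibly inside the cited proposition in the paper's version. In short: same skeleton (positivity, commutation of leading forms, common-power extraction), but you replace two citations by direct arguments at the cost of a slightly under-justified positivity claim that the paper settles in one line from \cite{vdE}*{Theorem~10.2.6}.
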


\begin{proof} Let $t_x(P)\coloneqq  \max \{0,\deg_x (P(0,x))\}$, where by definition $\deg_x(0)= -\infty$. By~\cite{vdE}*{Theorem~10.2.6}, we know that $t_x(P)>0$. Therefore $v_{\rho,\sigma}(P)\ge \rho t_x(P)>0$ and similarly $v_{\rho,\sigma}(Q)>0$. Hence,
$$
v_{\rho,\sigma}(P)+v_{\rho,\sigma}(Q)-(\rho+\sigma)>0
$$
since $\rho+\sigma<0$, and so, by~\cite{GGV}*{Proposition~1.13} we obtain that $[\ell_{\rho,\sigma}(P),\ell_{\rho,\sigma}(Q)]=0$. Moreover, by~\cite{vdE}*{Theorem~10.2.1}, we have
$$
\frac{v_{\rho,\sigma}(P)}{v_{\rho,\sigma}(Q)} = \frac{v_{1,1}(P)}{v_{1,1}(Q)} = \frac{m}{n}.
$$
Applying now~\cite{GGV}*{Proposition~2.1(2b)}, we finish the proof.
\end{proof}

\begin{remark}\label{en de R} From the first equality in \eqref{potencias de R} it follows immediately that $R$ is not a monomial and $\en_{\rho,\sigma}(R)=(a,b)$ .
\end{remark}

\begin{corollary}\label{R0} There exist $\lambda\in K^{\times}$ and a $(\rho,\sigma)$-homogeneous $R_0\in L$ such that $\ell_{\rho,\sigma}(P) = \lambda R_0^k$ with $k$ maximum (hence $m\mid k$ and we can assume that~$R=R_0^{k/m}$).
\end{corollary}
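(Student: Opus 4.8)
The plan is to read the identity $\ell_{\rho,\sigma}(P)=\lambda_P R^m$ from Proposition~\ref{existe R} through the prism of unique factorization in $L=K[x,y]$. Note first that the $(\rho,\sigma)$-degree makes $L$ into a $\mathds{Z}$-graded domain (with $\deg x=\rho$, $\deg y=\sigma$), for which it is still a unique factorization domain; concretely, since the hypothesis $(0,-1)<(\rho,\sigma)<(1,-1)$ forces $\rho>0>\sigma$ and $\rho<-\sigma$, every $(\rho,\sigma)$-homogeneous element of $L$ can be written uniquely as $x^{\alpha}y^{\beta}g(z)$ with $z\coloneqq x^{-\sigma}y^{\rho}$, $g\in K[z]$ and $g(0)\neq0$, which makes the homogeneous primes explicit ($x$, $y$, and the homogeneous irreducibles arising from the irreducible factors of such $g$'s). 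The elementary fact I would isolate and use repeatedly is that if $F^{k}$ is $(\rho,\sigma)$-homogeneous for some $F\in L$, then $F$ is $(\rho,\sigma)$-homogeneous: compare the lowest and highest homogeneous components of $F$ and use that $L$ is a domain and the grading group $\mathds{Z}$ is torsion-free.

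Next I would establish existence of the maximal $k$. Passing if necessary to $\overline{K}$ (equivalently, keeping track of scalars — recall that $R$ in Proposition~\ref{existe R} is only determined up to a scalar), write the homogeneous factorization $\ell_{\rho,\sigma}(P)=\lambda_P\prod_i\pi_i^{c_i}$ into pairwise non-associate homogeneous primes $\pi_i$. Since $R$ is not a monomial by Remark~\ref{en de R}, $\ell_{\rho,\sigma}(P)$ is not a unit, so some $c_i\geq1$. For a homogeneous $R_0$ one has $\ell_{\rho,\sigma}(P)=R_0^{k}$ precisely when the primes dividing $R_0$ are among the $\pi_i$ with exponents $c_i/k$, i.e.\ exactly when $k$ is a common divisor of the $c_i$ (over $\overline K$ the scalar $\lambda_P$ is no obstruction). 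Hence the set of admissible $k$ is finite, contains $1$, and has a maximum, namely $k=\gcd_i c_i$; choosing the corresponding $R_0$ gives the statement.

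It remains to show $m\mid k$ and that $R=R_0^{k/m}$. Using $\ell_{\rho,\sigma}(P)=\lambda_P R^m$ together with the homogeneous factorization $R=\mu\prod_i\pi_i^{a_i}$, unique factorization forces $c_i=m a_i$ for every $i$; therefore $k=\gcd_i c_i=m\gcd_i a_i$, so $m\mid k$. Finally, putting $d\coloneqq k/m$ we get $(R_0^{d})^{m}=R_0^{k}=\ell_{\rho,\sigma}(P)=\lambda_P R^{m}$, and since $L$ is a domain of characteristic zero, $R_0^{d}$ equals $R$ up to a scalar (an $m$-th root of $\lambda_P$ times an $m$-th root of unity); as $R$ was fixed only up to a scalar, we may replace it by $R_0^{d}=R_0^{k/m}$, as claimed.

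The main obstacle here is bookkeeping rather than conceptual: one must be careful that the scalar $\lambda_P$ does not spoil the \emph{equality} $R_0^{k}=\ell_{\rho,\sigma}(P)$ (as opposed to equality up to scalar), which is why one either works over $\overline K$ or, more transparently, reads all these equalities ``up to scalar'', consistently with $R$ being defined only up to scalar in Proposition~\ref{existe R}. Once that convention is fixed, everything reduces to the computation of the greatest common divisor of the multiplicities of the homogeneous prime factors in the graded UFD $L$.
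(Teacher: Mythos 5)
Your argument is correct and is essentially the argument the paper leaves implicit: Corollary~\ref{R0} is stated without proof as an immediate consequence of Proposition~\ref{existe R}, the point being exactly your observation that $\ell_{\rho,\sigma}(P)=\lambda_P R^m$ together with unique factorization into $(\rho,\sigma)$-homogeneous irreducibles yields a maximal $k$ (the gcd of the multiplicities), with $m\mid k$ and $R_0^{k/m}$ equal to $R$ up to a scalar. Your caveat about the constant $\lambda_P$ (exact equality versus equality up to a scalar, handled by adjusting scalars or passing to $\overline{K}$) is well taken and is consistent with how the paper itself uses $R_0$ afterwards, e.g.\ in Proposition~\ref{corchete que da potencia de R0}, where constants $\mu,\lambda_2\in K^{\times}$ are carried along explicitly.
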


\begin{proposition}\label{corchete que da potencia de R0} There exist $G_0\in L$, $s\in \mathds{N}$ and $\mu\in K^{\times}$ such that
$$
[\ell_{\rho,\sigma}(G_0),\ell_{\rho,\sigma}(P)]=\mu R_0^s,
$$
where $R_0$ is as in Corollary~\ref{R0}.
\end{proposition}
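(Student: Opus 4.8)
The plan is to squeeze $G_0$ out of the one hypothesis not yet exploited, namely that $[P,Q]=\mu_0$ for some $\mu_0\in K^{\times}$ (recall $(P,Q)$ is a Jacobian pair). By Proposition~\ref{existe R}, Corollary~\ref{R0} and the equality $v_{\rho,\sigma}(P)/v_{\rho,\sigma}(Q)=m/n$ obtained in the proof of the former, we may write
$$
\ell_{\rho,\sigma}(P)=\lambda_P R_0^k,\qquad \ell_{\rho,\sigma}(Q)=\lambda_Q R_0^{N},\qquad N\coloneqq \frac{kn}{m}\in\mathds{N},
$$
with $\lambda_P,\lambda_Q\in K^{\times}$, $m\mid k$, $mN=kn$ and $v_{\rho,\sigma}(R_0)=v_{\rho,\sigma}(P)/k>0$; moreover $[\ell_{\rho,\sigma}(P),\ell_{\rho,\sigma}(Q)]=0$ by \cite{GGV}*{Proposition~1.13}, and $R_0$ is not a monomial by Remark~\ref{en de R}. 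It suffices to produce a $(\rho,\sigma)$-homogeneous $\Gamma\in L$ with $[\Gamma,R_0]$ a nonzero scalar multiple of a power of $R_0$: since the bracket is a biderivation and $R_0$ commutes with itself, $G_0\coloneqq R_0^{\,t}\Gamma$ then satisfies
$$
[\ell_{\rho,\sigma}(G_0),\ell_{\rho,\sigma}(P)]=[R_0^{\,t}\Gamma,\lambda_P R_0^k]=\lambda_P\,k\,R_0^{\,t+k-1}[\Gamma,R_0],
$$
which is a nonzero element of $K^{\times}R_0^{\,s}$ for a suitable $s\in\mathds{N}$ once the integer $t\ge0$ is chosen large enough.

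To build such a $\Gamma$ I would compare powers of $P$ and $Q$. Since $\ell_{\rho,\sigma}(P^n)=\lambda_P^{\,n}R_0^{kn}$ and $\ell_{\rho,\sigma}(Q^m)=\lambda_Q^{\,m}R_0^{kn}$ agree up to a scalar, the polynomial
$$
W\coloneqq \lambda_Q^{\,m}P^n-\lambda_P^{\,n}Q^m\in L
$$
has $v_{\rho,\sigma}(W)<v_{\rho,\sigma}(P^n)$, and $W\ne0$ since otherwise $0=[P^n,Q^m]=nm\mu_0P^{n-1}Q^{m-1}\ne0$. Now
$$
[P^n,W]=-\lambda_P^{\,n}[P^n,Q^m]=-\lambda_P^{\,n}nm\mu_0\,P^{n-1}Q^{m-1},
$$
whose $(\rho,\sigma)$-leading form is a nonzero scalar multiple of $R_0^{\,k(n-1)+N(m-1)}=R_0^{\,2kn-k-N}$. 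If $[R_0,\ell_{\rho,\sigma}(W)]\ne0$, then $[\ell_{\rho,\sigma}(P^n),\ell_{\rho,\sigma}(W)]=\lambda_P^{\,n}kn\,R_0^{\,kn-1}[R_0,\ell_{\rho,\sigma}(W)]$ is the leading form of $[P^n,W]$ (the top piece cannot cancel, being a product of nonzero factors in the domain $L$); comparing with the previous display and cancelling $R_0^{\,kn-1}$ yields $[\ell_{\rho,\sigma}(W),R_0]=\kappa R_0^{\,kn-k-N+1}$ with $\kappa\in K^{\times}$, so $\Gamma\coloneqq\ell_{\rho,\sigma}(W)$ works and $G_0\coloneqq R_0^{\,t}\ell_{\rho,\sigma}(W)$ with $t$ large does the job.

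The delicate case is $[R_0,\ell_{\rho,\sigma}(W)]=0$. Since $R_0$ is not a monomial, \cite{GGV}*{Proposition~2.1} forces $\ell_{\rho,\sigma}(W)=cR_0^{\,p}$ for some $c\in K^{\times}$, $p\in\mathds{N}_0$; I would then replace $W$ by $W-cR_0^{\,p}\in L$ and iterate. This removes one $(\rho,\sigma)$-homogeneous component of $W$ at each step, so after finitely many steps one reaches either a polynomial $W'$ whose leading form is not a scalar multiple of a power of $R_0$ (and re-runs the previous paragraph with $W'$ and the correspondingly modified $[P^n,W']$), or the value $0$, i.e.\ one finds $\lambda_Q^{\,m}P^n-\lambda_P^{\,n}Q^m=\chi(R_0)$ for a nonconstant polynomial $\chi$. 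In this last situation, applying $[P^n,{-}]$ to the identity and cancelling $nP^{n-1}$ gives $-\chi'(R_0)[P,R_0]=\lambda_P^{\,n}m\mu_0\,Q^{m-1}$, which is nonzero; passing to leading forms and using that $\ell_{\rho,\sigma}(\chi'(R_0))$ and $\ell_{\rho,\sigma}(Q)^{\,m-1}$ are powers of $R_0$, one concludes that $[P_{i_1},R_0]$ is a nonzero scalar multiple of a power of $R_0$, where $P_{i_1}$ is the first $(\rho,\sigma)$-homogeneous component of $P$ with $[P_{i_1},R_0]\ne0$; so $\Gamma\coloneqq P_{i_1}$ works. (If $[P,R_0]=0$ one argues symmetrically with $Q$ and $[Q,P]=-\mu_0$.)

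The main obstacle is precisely this descent: one must check that the iteration is legitimate, that at each stage $[P^n,W']$ is nonzero and its leading form is divisible by the power of $R_0$ one wishes to cancel, and that the exponents produced (in particular $kn-k-N+1$, resp.\ $N(m-1)-\deg\chi+1$) are nonnegative, so that the $\Gamma$ and $G_0$ one writes down really lie in $L$ and $s\in\mathds{N}$. Away from this point the argument is only the Leibniz rule together with the degree bookkeeping for brackets of leading forms supplied by \cite{GGV}*{Propositions~1.13 and~2.1}.
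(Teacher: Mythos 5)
Your route is genuinely different from the paper's: the paper simply invokes \cite{GGV}*{Lemma~2.2} to obtain $G_0$ with $[\ell_{\rho,\sigma}(G_0),\ell_{\rho,\sigma}(P)]\ne 0$ and commuting with $\ell_{\rho,\sigma}(P)$, and then applies \cite{GGV}*{Proposition~2.1(2b)} together with the maximality of $k$; you instead try to manufacture $G_0$ directly from $[P,Q]=\mu_0$ via $W=\lambda_Q^m P^n-\lambda_P^n Q^m$. Your first step (the case $[R_0,\ell_{\rho,\sigma}(W)]\ne 0$) and your terminal case $W=\chi(R_0)$ are sound, but the iteration between them has a genuine gap. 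Once you have subtracted $\chi_1(R_0)$ (the powers of $R_0$ removed so far) and reached $W'$ with $[R_0,\ell_{\rho,\sigma}(W')]\ne 0$, the bracket you must analyse is
$$
[P^n,W']=-\lambda_P^n nm\mu_0\,P^{n-1}Q^{m-1}-n\,\chi_1'(R_0)\,P^{n-1}[P,R_0],
$$
and your conclusion needs its $(\rho,\sigma)$-leading form to be a scalar multiple of a power of $R_0$, so that after cancelling $R_0^{kn-1}$ you may conclude the same for $[R_0,\ell_{\rho,\sigma}(W')]$. Nothing controls the second term: $[P,R_0]=[P_{<},R_0]$, where $P_{<}$ is the part of $P$ below its leading form, and $\ell_{\rho,\sigma}([P,R_0])$ (your $[P_{i_1},R_0]$) is a priori an arbitrary homogeneous polynomial --- knowing that such brackets are scalar multiples of powers of $R_0$ is essentially the statement you are in the middle of proving. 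If that term attains the top degree (alone or jointly with the first), the leading-form comparison only yields a linear relation between $[P_{i_1},R_0]$ and $[R_0,\ell_{\rho,\sigma}(W')]$, and produces no admissible $\Gamma$.

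Note also that the items you flag at the end are the harmless ones: nonnegativity of the exponents $kn-k-N+1$ and $N(m-1)-\deg\chi+1$ is automatic, since comparing the multiplicity of any irreducible factor of the nonconstant polynomial $R_0$ on the two sides of the identities you derive forces it; likewise divisibility by $R_0^{kn-1}$ is built into the identity $[\ell_{\rho,\sigma}(P^n),\ell_{\rho,\sigma}(W')]=\lambda_P^n kn\,R_0^{kn-1}[R_0,\ell_{\rho,\sigma}(W')]$. The real missing ingredient is exactly what \cite{GGV}*{Lemma~2.2} supplies --- an element $G_0$ whose bracket with $\ell_{\rho,\sigma}(P)$ is nonzero and commutes with $\ell_{\rho,\sigma}(P)$ --- after which \cite{GGV}*{Proposition~2.1(2b)} gives $\mu R_0^s$ at once; your descent would have to be strengthened to an argument of that kind before the induction closes.
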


\begin{proof} By~\cite{GGV}*{Lemma~2.2} we know that there exists $G_0\in L$, such that
$$
[\ell_{\rho,\sigma}(G_0),\ell_{\rho,\sigma}(P)]\ne 0 \quad\text{and}\quad [[\ell_{\rho,\sigma}(G_0),\ell_{\rho,\sigma}(P)],\ell_{\rho,\sigma}(P)]=0.
$$
Hence, by \cite{GGV}*{Proposition~2.1(2b)}, given $m_1,m_2\in \mathds{Z}$ such that $\frac{m_2}{m_1}=\frac{v_{\rho,\sigma}[\ell_{\rho,\sigma} (G_0),\ell_{\rho,\sigma}(P)]} {v_{\rho,\sigma}(P)}$ and $\gcd(m_1,m_2)=1$, there exist $R_1\in L$ and $\lambda_1,\lambda_2\in K^{\times}$ such that
$$
\ell_{\rho,\sigma}(P) = \lambda_1 R_1^{m_1} \quad\text{and}\quad [\ell_{\rho,\sigma}(G_0),\ell_{\rho,\sigma}(P)] = \lambda_2 R_1^{m_2}.
$$
Let $k$ be as in Corollary 2.4. Clearly $m_1\mid k$ and we can assume that $R_1 = R_0^{k/m_1}$. Therefore
$$
[\ell_{\rho,\sigma}(G_0),\ell_{\rho,\sigma}(P)] = \lambda_2 R_0^{m_2k/m_1}.
$$
So, the result is true with $s= m_2k/m_1$ and $\mu=\lambda_2$.
\end{proof}

\begin{corollary}\label{[G,R]=R^i} There exists a $(\rho,\sigma)$-homogeneous polynomial $G_1$ such that
\begin{equation}\label{eq [G,R]=R^i}
[G_1,R]= R^i \quad\text{for some $i\ge 0$,}
\end{equation}
where $R$ is as in Corollary~\ref{R0}.
\end{corollary}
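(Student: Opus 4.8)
The plan is to transport the identity furnished by Proposition~\ref{corchete que da potencia de R0} (a bracket equal to a power of $R_0$) into one of the requested shape (a bracket equal to a power of $R$), using the relations $\ell_{\rho,\sigma}(P)=R_0^{k}$ and $R=R_0^{k/m}$ from Corollary~\ref{R0}. Set $p\coloneqq k/m\in\mathds{N}$, so that $R=R_0^{\,p}$; the target is an identity $[G_1,R]=R^{i}$ with $G_1$ being $(\rho,\sigma)$-homogeneous and $i\ge 0$.

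First I would take $G_0\in L$, $s\in\mathds{N}$ and $\mu\in K^{\times}$ as in Proposition~\ref{corchete que da potencia de R0}, so that $[\ell_{\rho,\sigma}(G_0),R_0^{k}]=\mu R_0^{s}$. Since the bracket is a derivation in each argument, the left-hand side equals $kR_0^{\,k-1}[\ell_{\rho,\sigma}(G_0),R_0]$; cancelling $R_0^{\,k-1}$ in the fraction field of $K[x,y]$ gives
$$
[\ell_{\rho,\sigma}(G_0),R_0]=\frac{\mu}{k}\,R_0^{\,\ell},\qquad \ell\coloneqq s-k+1 .
$$
Here $\ell\ge 0$: the left-hand side is a polynomial, $R_0$ is not a unit of $K[x,y]$ (indeed not a monomial, since $R=R_0^{k/m}$ is not, by Remark~\ref{en de R}), and a negative $\ell$ would force $R_0$ to divide a nonzero constant.

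The single genuine obstacle is that $\ell$ need not be $\equiv 1\pmod p$, so $[\ell_{\rho,\sigma}(G_0),R_0]$ is generally not a power of $R=R_0^{\,p}$. I would get around this by putting an extra power of $R_0$ into $G_1$: choose $j\in\{0,\dots,p-1\}$ with $j\equiv 1-\ell\pmod p$ and set
$$
G_1\coloneqq \frac{k}{p\mu}\,R_0^{\,j}\,\ell_{\rho,\sigma}(G_0),
$$
which is $(\rho,\sigma)$-homogeneous as a scalar multiple of a product of $(\rho,\sigma)$-homogeneous polynomials. Because $[R_0^{\,j},R_0]=0$, the derivation property gives $[G_1,R_0]=\frac{1}{p}R_0^{\,j+\ell}$, and hence
$$
[G_1,R]=[G_1,R_0^{\,p}]=p\,R_0^{\,p-1}\,[G_1,R_0]=R_0^{\,p-1+j+\ell}.
$$
Finally $p-1+j+\ell\ge 0$ and, by the choice of $j$, $p-1+j+\ell\equiv 0\pmod p$, so $p-1+j+\ell=pi$ for a unique $i\in\mathds{N}_0$; thus $[G_1,R]=(R_0^{\,p})^{i}=R^{i}$, as desired. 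Beyond this modular bookkeeping in $p$, $s$ and $k$, every step is a routine manipulation of the bracket in the commutative ring $K[x,y]$.
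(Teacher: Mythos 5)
Your proof is correct and follows essentially the same route as the paper: starting from Proposition~\ref{corchete que da potencia de R0}, multiplying $\ell_{\rho,\sigma}(G_0)$ by a suitable power of $R_0$ and using the derivation property of the bracket to turn $\mu R_0^s$ into a power of $R=R_0^{k/m}$. The only difference is bookkeeping: you first cancel down to $[\ell_{\rho,\sigma}(G_0),R_0]=\frac{\mu}{k}R_0^{\ell}$ (justifying $\ell\ge 0$ correctly) and choose the correcting exponent modulo $k/m$, whereas the paper keeps the factor $mR^{m-1}$ on both sides and takes a sufficiently large multiple, so no new idea is involved either way.
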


\begin{proof} Let $G_0$ be as in Proposition~\ref{corchete que da potencia de R0}. By Propositions~\ref{existe R} and~\ref{corchete que da potencia de R0}, we have
$$
m R^{m-1}[\ell_{\rho,\sigma}(G_0),R]=[\ell_{\rho,\sigma}(G_0),R^m] =\frac{\mu}{\lambda_P} R_0^s,
$$
where $s$, $\lambda_P$ and $\mu$ are as in Propositions~\ref{existe R} and~\ref{corchete que da potencia de R0}. Let $k$ be as in Corollary~\ref{R0}. Since $m>1$ we can choose $j\in \mathds{N}$ such that $j\frac{k}{m}(m-1)\ge s$. Set
$$
G_1\coloneqq  \lambda_P\mu^{-1}m\ell_{\rho,\sigma}(G_0)R_0^t,
$$
where $t\coloneqq  j\frac{k}{m}(m-1)-s$. Clearly,
$$
m R^{m-1}[G_1,R]= \frac{\lambda_P}{\mu} R_0^t m^2 R^{m-1}[\ell_{\rho,\sigma}(G_0),R] = mR_0^{t+s}=mR_0^{j\frac{k}{m}(m-1)} = mR^{j(m-1)}.
$$
So, the equality in~\eqref{eq [G,R]=R^i} is true with $i=(j-1)(m-1)$.
\end{proof}

\begin{proposition}\label{desigualdad} Let $R$ be as in Corollary~\ref{R0}. The point $(a,b)$ satisfies
$$
v_{\rho,\sigma}(1,0)\le v_{\rho,\sigma}(a,b)=\frac 1m v_{\rho,\sigma}(P)=v_{\rho,\sigma}(R).
$$
\end{proposition}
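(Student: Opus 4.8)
The plan is to handle the two equalities first---they are purely formal---and then to reduce the inequality $v_{\rho,\sigma}(1,0)\le v_{\rho,\sigma}(a,b)$ to a lower bound on $v_{\rho,\sigma}(P)$ coming from the hypothesis that $(P,Q)$ is a standard minimal pair.

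For the equalities: by Proposition~\ref{existe R} we have $\ell_{\rho,\sigma}(P)=\lambda_P R^m$ with $\lambda_P\in K^{\times}$, so $v_{\rho,\sigma}(P)=v_{\rho,\sigma}\bigl(\ell_{\rho,\sigma}(P)\bigr)=v_{\rho,\sigma}(\lambda_P R^m)=m\,v_{\rho,\sigma}(R)$, which is the second equality. For the first, $R$ is $(\rho,\sigma)$-homogeneous and, by Remark~\ref{en de R}, $(a,b)=\en_{\rho,\sigma}(R)\in\Supp(R)$; hence $(a,b)$ lies on the line $\rho i+\sigma j=v_{\rho,\sigma}(R)$, that is, $v_{\rho,\sigma}(a,b)=v_{\rho,\sigma}(R)$. (Equivalently, from $\en_{\rho,\sigma}(P)=m(a,b)$ one gets $v_{\rho,\sigma}(P)=v_{\rho,\sigma}\bigl(m(a,b)\bigr)=m\,v_{\rho,\sigma}(a,b)$.)

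For the inequality: since $v_{\rho,\sigma}(1,0)=\rho$ and $v_{\rho,\sigma}(a,b)=\tfrac1m v_{\rho,\sigma}(P)$ by the above, the claim is equivalent to $v_{\rho,\sigma}(P)\ge m\rho$. In the proof of Proposition~\ref{existe R} it was noted that $\rho>0$ and that $\bigl(t_x(P),0\bigr)\in\Supp(P)$, whence $v_{\rho,\sigma}(P)\ge v_{\rho,\sigma}\bigl(t_x(P),0\bigr)=\rho\,t_x(P)$. So it is enough to know that $t_x(P)\ge m$, i.e., that $\deg_x\bigl(P(x,0)\bigr)\ge m$. This strengthens the bound $t_x(P)>0$ that sufficed in Proposition~\ref{existe R}, and it should be extracted from the normalization built into the notion of a standard $(m,n)$-pair (cf.~\cite{GGV}, and~\cite{vdE}*{Theorem~10.2.6}, from which only $t_x(P)>0$ has been used so far). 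Granting $t_x(P)\ge m$, we obtain $v_{\rho,\sigma}(P)\ge\rho\,t_x(P)\ge m\rho$, and together with the two equalities this proves the proposition.

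The hard part will be precisely this last input, $t_x(P)\ge m$: one must check that it is genuinely guaranteed by the hypotheses on the standard minimal pair $(P,Q)$, and not merely by combinatorics. Indeed the inequality fails for an arbitrary $(\rho,\sigma)$-homogeneous $R$ with $\en_{\rho,\sigma}(R)=(a,b)$, $a>b>0$: for instance $R=c_1x^2y+c_2x^5y^3$ is $(2,-3)$-homogeneous with $(\rho,\sigma)=(2,-3)\in\,](0,-1),(1,-1)[$, $\en_{2,-3}(R)=(5,3)$, yet $v_{2,-3}(R)=1<2=\rho$; so the structure of $P$ (through $v_{\rho,\sigma}(P)\ge m\rho$) is really being used. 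Everything else is routine bookkeeping with $(\rho,\sigma)$-degrees and the homogeneity of $R$, relying only on Proposition~\ref{existe R} and Remark~\ref{en de R}.
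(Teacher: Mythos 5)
Your reduction is the right one, and it is in fact the paper's: the two equalities are immediate from Proposition~\ref{existe R} and Remark~\ref{en de R}, and the inequality amounts to showing $t_x(P)\ge m$, where $t_x(P)$ is the largest $t$ with $(t,0)\in\Supp(P)$, since then $v_{\rho,\sigma}(P)\ge \rho\, t_x(P)\ge m\rho=m\,v_{\rho,\sigma}(1,0)$. But precisely at this point you stop: you say the bound $t_x(P)\ge m$ ``should be extracted from the normalization built into the notion of a standard $(m,n)$-pair'' and flag it as the hard part, without establishing it. That is a genuine gap, because the entire content of the proposition sits in that bound; your example of a $(2,-3)$-homogeneous $R$ with $v_{2,-3}(R)<\rho$ usefully shows the bound is not a formal consequence of homogeneity, but it does not supply the missing argument, and an appeal to ``normalization'' is not a proof.

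The paper closes this gap in one line, and not via a normalization convention but via a divisibility property of Jacobian pairs: with $t\coloneqq t_x(P)$, one has $t>0$ by \cite{vdE}*{Proposition~10.2.6} (the fact you already used through Proposition~\ref{existe R}), and $m\mid t$ by \cite{vdE}*{Theorem~10.2.1}; hence $t\ge m$ and $\frac 1m v_{\rho,\sigma}(P)\ge \frac 1m v_{\rho,\sigma}(t,0)=\frac{t}{m}\rho\ge \rho=v_{\rho,\sigma}(1,0)$. So the missing ingredient is exactly the statement that $m$ divides the $x$-degree of $P(x,0)$; once you cite (or prove) that divisibility, your argument coincides with the paper's proof.
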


\begin{proof} Let $t$ be maximum such that $(t,0)\in \Supp(P)$. By \cite{vdE}*{Proposition~10.2.6} we know that~$t$ exists and that $t>0$. By \cite{vdE}*{Theorem~10.2.1} we also know that $m\mid t$, so that
$$
\frac 1m v_{\rho,\sigma}(P)\ge \frac 1m v_{\rho,\sigma}(t,0)\ge v_{\rho,\sigma}(1,0),
$$
as desired.
\end{proof}

From Proposition~\ref{desigualdad} it follows that $(a,b)$ can be not of the form $(b+1,b)$ with $b>0$. In fact, the inequality
$$
v_{\rho,\sigma}(1,0)\le v_{\rho,\sigma}(b+1,b)
$$
implies that $b(\rho+\sigma)=v_{\rho,\sigma}(b,b)\ge 0$, which is impossible if $b>0$, since $\rho+\sigma<0$. On the other hand, we have the following result:

\begin{proposition}\label{ejemplo} Let $(a_0,b_0)\in\mathds{N}_0\times \mathds{N}_0$. If $(a_0,b_0)=(k,0)+j(2,1)$ with $k>0$ and $j>0$, then there exist $(1,-2)$-homogeneous polynomials $R$ and $G$ such that
$$
\st_{1,-2}(R) = (k,0),\quad \en_{1,-2}(R)=(a_0,b_0),\quad [G,R]= R^2
$$
and $R$ is not a monomial.
\end{proposition}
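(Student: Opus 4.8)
The plan is to exhibit $R$ and $G$ explicitly — in the spirit of the $(1,-1)$-example recalled in the introduction — and to check the three required properties by a direct computation. Writing the hypothesis out, $(a_0,b_0)=(k+2j,j)$ with $k,j\in\mathds{N}$. Every $(1,-2)$-homogeneous polynomial of $(1,-2)$-degree $\delta\ge 0$ has the form $x^{\delta}f(x^2y)$ for a one-variable polynomial $f$, and when $f(0)\ne 0$ we have $\en_{1,-2}\bigl(x^{\delta}f(x^2y)\bigr)=(\delta+2\deg f,\,\deg f)$; matching this with $(a_0,b_0)$ forces $\delta=k$ and $\deg f=j$. So I would look for $R=x^kf(x^2y)$ and, after a degree count in the identity $[G,R]=R^2$, for $G=x^{k-1}g(x^2y)$.

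Setting $z\coloneqq x^2y$ and computing $[G,R]=\partial_xG\,\partial_yR-\partial_yG\,\partial_xR$, the terms carrying an explicit factor $y$ cancel, leaving $[G,R]=x^{2k}\bigl((k-1)gf'-kfg'\bigr)(z)$. Hence the requirement $[G,R]=R^2=x^{2k}f(z)^2$ becomes the first-order linear equation $(k-1)gf'-kfg'=f^2$. Treating $f$ as unknown as well, the substitution $g=fq$ turns this into $f'q+kfq'=-f$, which forces $\deg q=1$; writing $q=\alpha z+\beta$ one gets $(\alpha z+\beta)f'+(k\alpha+1)f=0$, so $f$ must be a constant multiple of a $j$-th power of a linear polynomial and $\alpha=-\tfrac1{k+j}$. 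Choosing that linear polynomial to be $1+z$ (any value $\ne 0$ at $z=0$ serves) produces the candidates
$$
R\coloneqq x^k(1+x^2y)^j \qquad\text{and}\qquad G\coloneqq -\frac{1}{k+j}\,x^{k-1}(1+x^2y)^{j+1}.
$$

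It then remains to verify the claims for this pair: $R,G\in L$ because $k\ge 1$; $R$ and $G$ are $(1,-2)$-homogeneous of degrees $k$ and $k-1$, their monomials being $x^{k+2l}y^l$ and $x^{k-1+2l}y^l$; $R=\sum_{l=0}^{j}\binom{j}{l}x^{k+2l}y^l$ has the two distinct terms $l=0$ and $l=j$ (since $j\ge 1$), so it is not a monomial and $\en_{1,-2}(R)=(k+2j,j)=(a_0,b_0)$; and $[G,R]=R^2$, obtained by computing the four partial derivatives, substituting into $\partial_xG\,\partial_yR-\partial_yG\,\partial_xR$, cancelling the $x^{2k+2}y(1+x^2y)^{2j-1}$ contributions, and reading off the coefficient $-\tfrac1{k+j}\bigl(j(k-1)-k(j+1)\bigr)=1$ of $x^{2k}(1+x^2y)^{2j}$. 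The only step calling for any thought is finding the pair $(R,G)$; once the ansatz reduces matters to the displayed linear equation it is immediate, and the remaining checks are routine polynomial algebra with no real obstacle.
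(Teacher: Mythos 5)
Your proposal is correct and, after the motivating ansatz, lands on exactly the pair the paper uses, namely $R=x^k(1+x^2y)^j$ and $G=-\frac{1}{k+j}x^{k-1}(1+x^2y)^{j+1}$, verified by the same direct bracket computation. The paper's proof simply exhibits this pair and leaves the straightforward check to the reader, so your argument is the same one with the derivation of the ansatz spelled out.
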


\begin{proof} Take $R\coloneqq x^k(1+x^2 y)^j$ and $G\coloneqq -\frac{1}{j+k}x^{k-1}(1+x^2 y)^{j+1}$.
\end{proof}

\begin{remark}\label{condicion para v sub rho sigma1} Let $(a_0,b_0)\in\mathds{N}_0\times \mathds{N}_0$ with $a_0>b_0>0$. If $v_{1,-2}(a_0,b_0)>0$, then we can apply Proposition~\ref{ejemplo} with $j:=b_0$ and $k:=a_0-2b_0$. So $(a_0,b_0)$ is a possible last lower corner (see Definition~\ref{Definicion pllc}).
\end{remark}

\section{Admissible chains}

In this section we introduce the notion of admissible chain, which encodes some of the properties of the lower part of $\frac{1}{m}H(P)$ (see Remark~\ref{parte inferior}). Here $P$ is the first component of an $(m,n)$-pair $(P,Q)$ in $L$. The main results are Theorem~\ref{condicion principal}, which shows we can assume $i=2$ in Corollary~\ref{[G,R]=R^i}, and our main technical result, Proposition~\ref{finitas direcciones}, which yields restrictions on the directions $(\rho,\sigma)$ that can occur for an $R$ as in Corollary~\ref{[G,R]=R^i} if you fix the starting point.

We begin by establishing some results that are useful for our purposes.

In the sequel $(\rho,\sigma)\in\mathfrak{V}\cap \hspace{0.7pt}](0,-1),(1,-1)[\hspace{0.7pt}$.

\begin{remark}\label{clausura integral} Assume $l\mid l_1$, $B\in L^{(l_1)}\setminus L^{(l)}$ and $A\in L^{(l)}\setminus \{0\}$. Then $AB\in L^{(l_1)}\setminus L^{(l)}$. Consequently, if $AB\in L^{(l)}$, $A\in L^{(l)}\setminus \{0\}$ and $B\in L^{(l_1)}$, then $B\in L^{(l)}$.
\end{remark}

\begin{lemma}\label{factores en jacobiano} Let $(\rho,\sigma)\in\mathfrak{V}\cap \hspace{0.7pt}](0,-1),(1,-1)[\hspace{0.7pt}$ and let $R,G_1\in L^{(l)}\setminus\{0\}$ be $(\rho,\sigma)$-homogeneous elements such that $[R,G_1]=R^i$ for an $i\in \mathds{N}_0$. Write $R= x^{u/\rho}r(z)$ and $G_1=x^{v/\rho}g(z)$, where $z\coloneqq x^{-\sigma/\rho}y$, $r$ and $g$ are univariate polynomials, $u\coloneqq v_{\rho,\sigma}(R)$ and $v\coloneqq  v_{\rho,\sigma}(G_1)$. The following facts hold:

\begin{enumerate}

\smallskip

\item We have
\begin{equation}\label{central1n}
\rho r^i=u g' r - v r'g.
\end{equation}

\smallskip

\item Let $h$ be a linear factor of $r$ and let $s$ and $t$ be the multiplicities of $h$ in $r$ and $g$ re\-spec\-tive\-ly. Write $r=h^s \tilde{r}$ and $g=h^t \tilde{g}$. Then
\begin{equation}\label{ecuacion de multiplicidad}
\rho h^{si}\tilde{r}^i = h^{t+s-1}\bigl((ut-vs)h'\tilde{g}\tilde{r} + h(u \tilde{g}' \tilde{r} - v\tilde{r}' \tilde{g})\bigr).
\end{equation}

\item If $i=2$, $\deg(g)\le \deg(r)+1$ and $\#\factors(r)>1$,  then there exists a linear factor~$h$ of~$r$ such that $ut-vs=0$.

\end{enumerate}

\end{lemma}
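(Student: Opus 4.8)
The plan is to argue by contradiction. Assume that $ut-vs\ne 0$ for \emph{every} linear factor $h$ of $r$, where (as in item~(2)) $s$ and $t$ denote the multiplicities of $h$ in $r$ and in $g$; I will reach a contradiction by comparing $\deg g$ with $\deg r$. Before starting I make two harmless reductions. Since the data $\rho,u,v,r,g$ and the identities~\eqref{central1n} and~\eqref{ecuacion de multiplicidad} are unaffected by extension of scalars, and $\#\factors(r)$ cannot decrease, I may assume $K$ algebraically closed, so that $r$ splits into linear factors and $\#\factors(r)$ is the number of distinct ones. Also, $g\ne 0$: otherwise~\eqref{central1n} with $i=2$ reads $\rho r^2=0$, forcing $r=0$ and contradicting $\#\factors(r)>1$.

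The core step is an $h$-adic valuation computation on~\eqref{ecuacion de multiplicidad} with $i=2$, namely
$$
\rho h^{2s}\tilde r^2 = h^{t+s-1}\bigl((ut-vs)h'\tilde g\tilde r + h(u\tilde g'\tilde r - v\tilde r'\tilde g)\bigr).
$$
Fix a linear factor $h$ of $r$. Then $h'$ is a nonzero constant, $h\nmid\tilde r$ and $h\nmid\tilde g$ (by maximality of $s$ and $t$), and $\rho\ne 0$; hence the left-hand side has $h$-adic valuation exactly $2s$. On the right-hand side, the summand $(ut-vs)h'\tilde g\tilde r$ has $h$-adic valuation $0$ --- this is the one and only place where the assumption $ut-vs\ne 0$ is used --- while $h(u\tilde g'\tilde r-v\tilde r'\tilde g)$ has $h$-adic valuation $\ge 1$, so the parenthesized factor has valuation $0$ and the whole right-hand side has valuation exactly $t+s-1$. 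Equating gives $2s=t+s-1$, i.e.\ $t=s+1$, for \emph{every} linear factor $h$ of $r$.

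To finish, write $r=c\prod_{j=1}^{k}h_j^{s_j}$ with $c\in K^\times$ and $h_1,\dots,h_k$ the distinct linear factors of $r$, so that $k=\#\factors(r)\ge 2$ and $\deg r=\sum_j s_j$. By the previous step $h_j^{s_j+1}$ divides $g$ for each $j$, and the $h_j$ are pairwise coprime, whence $\prod_j h_j^{s_j+1}\mid g$ and
$$
\deg g\ \ge\ \sum_{j=1}^{k}(s_j+1)\ =\ \deg r+k\ \ge\ \deg r+2,
$$
contradicting $\deg g\le\deg r+1$. Therefore $ut-vs=0$ for some linear factor $h$ of $r$, as claimed.

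I expect the only point needing genuine care to be the valuation count in the middle step: one must be sure that $(ut-vs)h'\tilde g\tilde r+h(u\tilde g'\tilde r-v\tilde r'\tilde g)$ has $h$-adic valuation exactly $0$, not more. This is safe precisely because its two summands have strictly different valuations --- $0$ for the first (where $ut-vs\ne 0$ is essential) and $\ge 1$ for the second --- so no cancellation can occur, and this comparison absorbs uniformly the formally degenerate configurations (such as $s=1$, where $h^{t+s-1}=h^{t}$). The two opening reductions and the concluding degree inequality are routine.
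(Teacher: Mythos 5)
Your argument for item~(3) is correct and is essentially the paper's own proof of that item: comparing the multiplicity of $h$ on the two sides of~\eqref{ecuacion de multiplicidad} (which works exactly because $ut-vs\ne 0$ keeps the parenthesized factor prime to $h$) forces $t=s+1$ for every linear factor of $r$, and then $\deg(g)\ge\deg(r)+\#\factors(r)>\deg(r)+1$ contradicts the hypothesis $\deg(g)\le\deg(r)+1$. Your explicit $h$-adic valuation bookkeeping, the observation that $g\ne 0$, and the reduction to $K$ algebraically closed (so that $\deg r=\sum_j s_j$) are harmless refinements; the paper is in fact more terse on these points, and your base-change remark patches a small implicit assumption in its degree count.

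The genuine gap is that the lemma has three items and you prove only the third. You treat~\eqref{central1n} and~\eqref{ecuacion de multiplicidad} as available identities, but they are part of the statement, and item~(1) is precisely where the hypotheses $[R,G_1]=R^i$ and $(\rho,\sigma)$-homogeneity are actually used. To get~\eqref{central1n} one needs $v_{\rho,\sigma}(R^i)=iu=u+v-\rho-\sigma$ (from \cite{GGV}*{Proposition 1.13}), and then the term-by-term computation
$$
[x^{u/\rho}a_kz^k,\,x^{v/\rho}b_lz^l]=\frac{a_kb_l}{\rho}\,x^{(u+v-\rho-\sigma)/\rho}(lu-kv)z^{k+l-1},
$$
which sums to $[R,G_1]=\frac 1\rho x^{iu/\rho}\bigl(ug'(z)r(z)-vr'(z)g(z)\bigr)$; equating with $R^i=x^{iu/\rho}r^i$ gives $\rho r^i=ug'r-vr'g$. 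Item~(2) then follows by substituting $r=h^s\tilde r$, $g=h^t\tilde g$ and applying the product rule. Without these two derivations your proposal, while sound where it argues, is an incomplete proof of the lemma as stated.
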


\begin{proof} (1)\enspace Write
$$
g(z)=\sum_{k=0}^{n_g}b_k z^k\quad\text{and}\quad r(z)=\sum_{k=0}^{n_r}a_k z^k.
$$
By~\cite{GGV}*{Proposition~1.13},
$$
iu=v_{\rho,\sigma}(R^i)=v_{\rho,\sigma}([R,G_1]) = v_{\rho,\sigma}(R) + v_{\rho,\sigma}(G_1) - \rho - \sigma = u + v - \rho - \sigma.
$$
Hence,
\begin{align*}
[x^{u/\rho}a_k z^k, x^{v/\rho}b_l z^l]&=\frac{a_k b_l}{\rho}x^{(u+v-\rho-\sigma)/\rho}z^{k+l-1}(lu-kv)\\
&= \frac {1}{\rho} x^{i u/\rho}[u l b_l z^{l-1} a_k z^k-v k a_k z^{k-1} b_l z^l].
\end{align*}
Using this we obtain that
$$
[R,G_1]=\frac 1{\rho} x^{iu/\rho}(u g'(z) r(z) - v r'(z)g(z)),
$$
which implies that $\rho r^i=u g' r - v r'g$, as we want,  because $R^i=[R,G_1]$.

\smallskip

\noindent (2)\enspace Since
$$
r' = s h^{s-1}h'\tilde{r} + h^s \tilde{r}'\quad\text{and} \quad g'=th^{t-1}h'\tilde{g}+h^t\tilde{g}',
$$
by statement~(1) we have
$$
\rho h^{si}\tilde{r}^i = h^{t+s-1}\bigl((ut-vs)h'\tilde{g}\tilde{r} + h(u \tilde{g}' \tilde{r} - v\tilde{r}' \tilde{g})\bigr),
$$
as we want.

\smallskip

\noindent (3)\enspace By statement~(2), if $i=2$ and $ut-vs\ne 0$ then $t=s+1$. If this is true for all factors, then
$$
\deg(g)\ge \deg(r)+\#\factors(r)>\deg(r)+1,
$$
a contradiction that concludes the proof.
\end{proof}

\begin{remark}\label{condicion para v sub rho sigma2} Let $(\rho,\sigma)\in\mathfrak{V}\cap \hspace{0.7pt}](0,-1),(1,-1)[\hspace{0.7pt}$ and let $(a,b)\in \mathds{N}_0\times \mathds{N}_0$ with $a>b>0$. If $a\le 2b$ and $v_{\rho,\sigma}(a,b)\ge \rho$, then
$$
v_{\rho,\sigma}(a,b)> -\rho-\sigma.
$$
In fact,
$$
(2\rho+\sigma)(a-b)=(\rho+\sigma)(a-2b)+v_{\rho,\sigma}(a,b)\ge (\rho+\sigma)(a-2b) + \rho> 0
$$
because $\rho+\sigma<0$ and $\rho>0$. So, $2\rho+\sigma>0$ since $a>b$, and hence,
$$
v_{\rho,\sigma}(a,b)\ge \rho = 2\rho+\sigma - (\rho+\sigma)> -\rho-\sigma,
$$
as we want. So, the condition $(n-1)v_{\rho,\sigma}(a,b)+\rho+\sigma\ge 0$ in Theorem~\ref{reduccion de j} is satisfied with $n=2$.
\end{remark}

\begin{theorem}\label{reduccion de j} Let $(\rho,\sigma)\in\mathfrak{V}\cap \hspace{0.7pt}](0,-1),(1,-1)[\hspace{0.7pt}$, let $i,n\in \mathds{N}_0$ with $n\ge 2$ and let $R,G_1\in L^{(l)}$ be $(\rho,\sigma)$-homogeneous elements. If $[R,G_1]=R^i$ and $(n-1)v_{\rho,\sigma}(R)+\rho+\sigma\ge 0$, then there exists a $(\rho,\sigma)$-homogeneous element $G_2\in L^{(l)}$ such that $[R,G_2]=R^n$. Moreover, if $R,G_1\in L$, then $G_2\in L$.
\end{theorem}

\begin{proof} If $i\in \{0,\dots,n\}$, then $G_2\coloneqq G_1R^{n-i}$ works. Else take $G_2\coloneqq \frac{G_1}{R^{i-n}}$ (in the field $\mathds{{}_{l_1}Q}$ of quotients of $L^{(l_1)})$, where $l_1:=\lcm(\rho,l)$. Since clearly $[R,G_1]=R^n$ (the Jacobian operator can be extended in a natural way to $\mathds{{}_{l_1}Q}$), we only must check that $G_2\in L^{(l)}$. By Remark~\ref{clausura integral} with $A=R^{i-n}$ and $B=G_2$, in order to carry out this task it suffices to prove that $G_2\in L^{(l_1)}$. Set $u\coloneqq v_{\rho,\sigma}(R)$ and $v\coloneqq  v_{\rho,\sigma}(G_1)$. By~\cite{GGV}*{Proposition~1.13} we have
\begin{equation}\label{igualdad de vs}
u+v-(\rho+\sigma)=i u.
\end{equation}
Therefore
\begin{equation}\label{otra igualdad de vs}
v=(i-n)u+((n-1)u+\rho+\sigma).
\end{equation}
Since $\rho+\sigma<0$ and, by hypothesis, $(n-1)u+\rho+\sigma\ge 0$, it follows that necessarily $u>0$. Hence equality~\eqref{otra igualdad de vs} implies that $v>0$, because $i-n>0$ and $(n-1)u+\rho+\sigma\ge 0$. Since $R$ and $G_1$ are $(\rho,\sigma)$-homogeneous there exist univariate polynomials $r$ and $g$ such that $R= x^{u/\rho}r(z)$ and $G_1=x^{v/\rho}g(z)$, where $z\coloneqq  x^{-\sigma/\rho}y$. Let $h$ be a linear factor of $r$ and let $s$ and $t$ be the multiplicities of $h$ in $r$ and $g$ respectively.

We claim that $t\ge s(i-n)$. Write $r=h^s \tilde{r}$ and $g=h^t \tilde{g}$. By Lemma~\ref{factores en jacobiano}(2) we know that
\begin{equation}\label{ecuacion de multiplicidad1}
\rho h^{si}\tilde{r}^i = h^{t+s-1}\bigl((ut-vs)h'\tilde{g}\tilde{r} + h(u \tilde{g}' \tilde{r} - v\tilde{r}' \tilde{g})\bigr).
\end{equation}
If $ut-vs=0$, then by equality~\eqref{igualdad de vs},
$$
t=\frac{vs}{u}=\frac{s}{u} \bigl((i-1)u+\rho+\sigma\bigr)=s\Bigr(i-1+\frac{\rho+\sigma}{u}\Bigr)\ge s(i-n),
$$
because $\frac{\rho+\sigma}{u}\ge 1-n$ by hypothesis, and the claim is true. On the other hand, if $ut-vs\ne 0$, then comparing the multiplicities of $h$ in~\eqref{ecuacion de multiplicidad1} we obtain that $si=t+s-1$. But then
$$
t=s(i-1)+1\ge s(i-n),
$$
which proves that the claim is also true in this case.

\smallskip

By the claim there exists $f(z)\in K[z]$ such that $g(z)=r(z)^{i-n}f(z)$, which implies that
$$
G_2=\frac{G_1}{R^{i-n}}=\frac{x^{v/\rho}g(z)}{(x^{u/\rho}r(z))^{i-n}}=x^{(v-(i-n)u)/\rho}f(z)\in L^{(l_1)},
$$
as desired.

\smallskip

Assume now that $R,G_1\in L$. It remains to check that $v_{-1,0}(G_2)\le 0$. For this we compute
$$
v-(i-n)u=(i-1)u+\rho+\sigma-(i-n)u=(n-1)u+\rho+\sigma\ge 0,
$$
which yields $v_{-1,0}(G_2)=v_{-1,0}(x^{v-(i-n)u)/\rho}f(z))\le 0$, since $v_{-1,0}(z)<0$.
\end{proof}

\begin{remark} Let $(\rho,\sigma)\in\mathfrak{V}\cap \hspace{0.7pt}](0,-1),(1,-1)[\hspace{0.7pt}$ and let $(a,b)\in \mathds{N}_0\times \mathds{N}_0$ with $a>b>0$. By Proposition~\ref{ejemplo}, if $a>2b$, then there exist $(\rho,\sigma)$-homogeneous polynomials $R$ and $G$ such that $R$ is not a mono\-mial and
$$
(a,b)=\en_{\rho,\sigma}(R)\quad\text{and}\quad [G,R]= R^2.
$$
\end{remark}

Next we prove that the previous result holds under a different condition

\begin{theorem}\label{condicion principal} Let $(\rho,\sigma)\in\mathfrak{V}\cap \hspace{0.7pt}](0,-1),(1,-1)[\hspace{0.7pt}$ and let $(a,b)\in \mathds{N}_0\times \mathds{N}_0$ with $a>b>0$. If $v_{\rho,\sigma}(a,b)\ge \rho$ and there exist $(\rho,\sigma)$-ho\-mogeneous elements $R,G_1\in L^{(l)}\setminus\{0\}$
such that
\begin{equation*}
(a,b)=\en_{\rho,\sigma}(R)\quad \text{and}\quad [G_1,R]= R^i\quad \text{for some $i\ge 0$,}
\end{equation*}
then there exists a $(\rho,\sigma)$-homogeneous polynomial $G$ such that $[G,R]= R^2$.
\end{theorem}

\begin{proof} By Remark~\ref{condicion para v sub rho sigma2} we have $v_{\rho,\sigma}(a,b)+\rho+\sigma\ge 0$. So, the hypothesis of Theorem~\ref{reduccion de j} are fulfilled for $n=2$, and applying it we obtain a $(\rho,\sigma)$-homogeneous polynomial $G$ such that $[G,R]= R^2$, as we want.
\end{proof}

\begin{lemma}\label{maximo grado} Let $(\rho,\sigma)\in\mathfrak{V}\cap \hspace{0.7pt}](0,-1),(1,-1)[\hspace{0.7pt}$ and let $R,G\in L^{(l)}$ be $(\rho,\sigma)$-homogeneous elements such that $[G,R]= R^i$ with $i\in\mathds{N}$ and $v_{\rho,\sigma}(R)>0$. Then
\begin{equation}\label{eq1}
v_{0,1}(\en_{\rho,\sigma}(G))\le (i-1)v_{0,1}(\en_{\rho,\sigma}(R))+1.
\end{equation}
\end{lemma}

\begin{proof} By~\cite{GGV}*{Proposition~2.4}, either $\en_{\rho,\sigma}(G)=(i-1) \en_{\rho,\sigma}(R)+(1,1)$ or $\en_{\rho,\sigma}(G) \sim \en_{\rho,\sigma}(R)$. In the first case, clearly
\begin{equation*}
v_{0,1}(\en_{\rho,\sigma}(G))= (i-1)v_{0,1}(\en_{\rho,\sigma}(R))+1.
\end{equation*}
Assume that $\en_{\rho,\sigma}(G)\sim \en_{\rho,\sigma}(R)$ and set $u\coloneqq v_{\rho,\sigma}(R)$ and $v\coloneqq v_{\rho,\sigma}(G)$. By~\cite{GGV}*{Proposition~1.13} we have $v=u(i-1)+\rho+\sigma$. So
$$
\en_{\rho,\sigma}(G)=\frac{v}{u} \en_{\rho,\sigma}(R)= (i-1)\en_{\rho,\sigma}(R)+\frac{\rho+\sigma}{u} \en_{\rho,\sigma}(R).
$$
Consequently
$$
v_{0,1}(\en_{\rho,\sigma}(G))= (i-1)v_{0,1}(\en_{\rho,\sigma}(R))+\frac{\rho+\sigma}{u}v_{0,1}(\en_{\rho,\sigma}(R)).
$$
Since $\frac{\rho+\sigma}{u}v_{0,1}(\en_{\rho,\sigma}(R)) \le 0$ because $u>0$, $\rho+\sigma<0$ and $v_{0,1}(\en_{\rho,\sigma}(R))\ge 0$, the the inequality~\eqref{eq1} also holds in this case.
\end{proof}

\begin{remark}\label{multiplicidad de la potencia} Let $f,\ov f\in K[x]$ be polynomials.
If $f(x)=\ov f(x^n)$, then $\lambda$ is a root of $f$ if and only if $\lambda^n$ is a root of
$\ov f$. Moreover, if $\lambda\ne 0$, then the multiplicity $m_\lambda$ of $\lambda$ in $f$ is the same as the
multiplicity $\ov m_{\lambda^n}$ of $\lambda^n$ in $\ov f$.
\end{remark}

\begin{remark}\label{gap}
Let $a,b,l\in\mathds{N}$. Set
$$
d\coloneqq\gcd(a,bl),\quad (\rho,\sigma)\coloneqq \left(\frac{bl}{d},-\frac{a}{d}\right)\quad\text{and}\quad \gap(\rho,l):=\frac{\rho}{\gcd(\rho,l)}.
$$
Note that, since $\gcd(a,b)=\gcd(b,d)$, we have
\begin{equation}\label{otra formula para gap}
\gap(\rho,l) = \frac{bl}{\gcd(bl,dl)} = \frac{b}{\gcd(b,d)} = \frac{b}{\gcd(a,b)}.
\end{equation}
Assume now that $R\in L^{(l)}$ is a $(\rho,\sigma)$-homogeneous element which is not a monomial, write
$$
R=x^{u/l}y^v f(z),\quad\text{where $z\coloneqq x^{-\sigma/\rho}y$ and $f(z) = \sum a_i z^i$ with $a_0\ne 0$,}
$$
and set $(a/l,b) \coloneqq \en_{\rho,\sigma}(R)-\st_{\rho,\sigma}(R)$. Then $a_i\ne 0$ implies $\gap(\rho,l)\mid i$. In fact, in that case we have $(u/l,v)+i(-\sigma/\rho,1)\in \frac 1l\mathds{Z}\times \mathds{N}_0$, which implies $-i\frac{\sigma}{\rho}\in\frac 1l \mathds{Z}$. Hence $\rho\mid il$, and so $\gap(\rho,l)\mid i$.
\end{remark}

\begin{notation}\label{starting y end}
Let $(\rho,\sigma)\in\mathfrak{V}\cap \hspace{0.7pt}](0,-1),(1,-1)[\hspace{0.7pt}$ and let $R\in L^{(l)}$ be a $(\rho,\sigma)$-ho\-mo\-ge\-neous element which is not a monomial. Let $\left(\frac{\upsilon_1}{l},\nu_1\right)\coloneqq  \en_{\rho,\sigma}(R) -\st_{\rho,\sigma}(R)$ and $\left(\frac{\upsilon_2}{l},\nu_2\right)\coloneqq \st_{\rho,\sigma}(R)$. In the sequel we set
$$
\quad N_1=N_1(R)\coloneqq \frac{\nu_1}{\gap(\rho,l)}\qquad\text{and}\qquad N_2=N_2(R)\coloneqq \gcd(\upsilon_2,\nu_2).
$$
Note that by Remark~\ref{gap} we have $N_1=\gcd(v_1,\nu_1)$.
\end{notation}

\begin{notation} For each $l\in \mathds{N}$ and each $(r/l,s)\in\frac{1}{l}\mathds{Z}\times\mathds{Z}\setminus\mathds{Z}(1,1)$, we let $\dir(r/l,s)$ denote the unique $(\rho,\sigma)\in\mathfrak{V}_{>0}$ such that $v_{\rho,\sigma}(r/l,s)=0$ (see the discussion below~\cite{GGV}*{Remark~3.1}).
\end{notation}

\begin{proposition}\label{finitas direcciones} Let $(\rho,\sigma)\in\mathfrak{V}\cap \hspace{0.7pt}](0,-1),(1,-1)[\hspace{0.7pt}$ and let $R,G\in L^{(l)}$ be $(\rho,\sigma)$-ho\-mo\-ge\-neous elements such that $R$ is not a monomial and $[G,R]= R^i$, where $i\in \mathds{N}$, and let $N_1$ and $N_2$ be as in Notation~\ref{starting y end}. Write $u\coloneqq v_{\rho,\sigma}(R)$ and $v\coloneqq v_{\rho,\sigma}(G)$. Assume that there exists $\ell>0$ such that $\ell u +\rho+\sigma>0$ (or, equivalently, that $u>0$). Write $R= x^{u/\rho}r(z)$ and $G=x^{v/\rho}g(z)$, where $r$ and $g$ are univariate polynomials and $z\coloneqq x^{-\sigma/\rho}y$. Then one of the following three cases occurs:
\begin{enumerate}

\smallskip

\item $\rho\mid l$ and $r=\xi h^j$ for some $\xi\in K^{\times}$, some linear polynomial $h\ne z$ and some $j\in \mathds{N}$.

\smallskip

\item There exist $\vartheta,t'\in \mathds{N}$ such that
$$
\qquad \vartheta\le N_1,\quad 0<t'<\ell \vartheta \quad\text{and}\quad (\rho,\sigma)=-\dir\left(t'\st_{\rho,\sigma}(R)+ \vartheta (1,1)\right).
$$
In this case there exists a linear factor of $r(z)$ with multiplicity $\vartheta$.

\smallskip

\item There exist $\vartheta,t'\in \mathds{N}$ such that
$$
\qquad \vartheta\mid N_2,\quad 0<t'<\ell \vartheta \quad\text{and}\quad (\rho,\sigma)= -\dir\left(t'\st_{\rho,\sigma}(R)+\vartheta (1,1)\right).
$$
In this case $\nu_2>0$.
\end{enumerate}
Moreover if $l=1$ and item~(1) occurs, then $v_{1,-2}(\en_{\rho,\sigma}(R))>0$.
\end{proposition}

\begin{proof} First note that
\begin{equation}\label{starting R no esta en la diagonal}
  \st_{\rho,\sigma}(R)\notin \mathds{N}_0(1,1),
\end{equation}
since $ \st_{\rho,\sigma}(R)=(n,n)$ implies $0<u=v_{\rho,\sigma}(\st_{\rho,\sigma}(R))=(\rho+\sigma)n\le 0$, which is impossible.
For each linear factor $h$ of $r$, we let $s$ and $t$ denote the multiplicities of $h$ in $r$ and $g$, res\-pectively. By equality~\eqref{ecuacion de multiplicidad} we know that
\begin{equation}\label{eq2}
t=s(i-1)+1\quad\text{or}\quad ut=vs,
\end{equation}
while by Lemma~\ref{maximo grado} we know that
$$
\deg(g)\le (i-1)\deg(r)+1.
$$
Therefore, if for all linear factors of $r$ the first equality in~\eqref{eq2} is satisfied, then there can be only one linear factor $h$ in $r$. Since $h$ is not a monomial (since $R$ is not), this implies that
$$
(u/\rho,0), (u/\rho-\sigma/\rho,1)\in\Supp(R)\subseteq \frac 1l\mathds{Z}\times \mathds{N}_0,
$$
which yields $\rho\mid l$, since $\gcd(\rho,\sigma)=1$. So, we are in case~(1).

Else there exists a factor $h=z-\lambda$ for which $ut=vs$. Now we will prove that if $\lambda\ne0$, then we are in case~(2). We set $t'\coloneqq s(i-1)-t$. Since $[G,R]= R^i$, by~\cite{GGV}*{Proposition~1.13} we have
\begin{equation}\label{ecuacion para a y b}
v=u(i-1)+\rho+\sigma.
\end{equation}
So
\begin{equation}\label{formula para t'}
t'=s(i-1)-t= s\frac{v}{u}-s\frac{\rho+\sigma}{u}-t= -s\frac{\rho+\sigma}{u}.
\end{equation}
Let $\ell$ be as in the statement. Since $\ell>-\frac{\rho+\sigma}{u}>0$ and $s>0$, from equality~\eqref{formula para t'} we obtain that $0<t'<\ell s$. Moreover,
\begin{equation}\label{paco}
t'\st_{\rho,\sigma}(R) + s (1,1)\notin\mathds{N}_0(1,1),
\end{equation}
since otherwise $\st_{\rho,\sigma}(R) = (n,n)$ for some $n\in \mathds{N_0}$, and so
$$
(\rho+\sigma)(\ell n+1) = \ell u + \rho + \sigma >0,
$$
which is impossible because $\rho + \sigma<0$. Combining~\eqref{paco} with the fact that, by equality~\eqref{ecuacion para a y b},
$$
0=u(i-1)-v+\rho+\sigma=u\frac{s(i-1)-t}{s}+\rho+\sigma=v_{\rho,\sigma}\left(\frac{t'}{s} \st_{\rho,\sigma}(R) +(1,1)\right),
$$
we conclude that
$$
(\rho,\sigma)=-\dir\left(t'\st_{\rho,\sigma}(R)+ s (1,1)\right).
$$
It remains to check that $s\le N_1$. Remember that $(\upsilon_2,\nu_2)\coloneqq \st_{\rho,\sigma}(R)$ and let $r_1$ be an univariate polynomial
such that $r(z)= z^{\nu_2}r_1(z)$. By Remark~\ref{gap}, since $R\in L^{(l)}$ and
$$
R = x^{u/\rho}r(z) = x^{u/\rho}z^{\nu_2}r_1(z) = x^{\upsilon_2 + \sigma\nu_2/\rho}x^{-\nu_2\sigma/\rho}y^{\nu_2} r_1(z) = x^{\upsilon_2} y^{\nu_2} r_1(z),
$$
there exists an univariate polynomial $\hat{r}_1$ such that $r_1(z)=\hat{r}_1(z^{\gap(\rho,l)})$. It follows that
$$
s=\mult_{r_1(z)}(\lambda)=\mult_{\hat r_1(z)}(\lambda^{\gap(\rho,l)})\le \deg(\hat{r}_1)=\frac{\deg(r_1)}{\gap(\rho,l)}=\frac{\nu_1}{\gap(\rho,l)}=N_1,
$$
where the second equality holds by Remark~\ref{multiplicidad de la potencia}. Setting in this case $\vartheta:=s$, we are in case~(2).

Now assume that there exists a factor $h=z-\lambda$ for which $ut=vs$ and that $\lambda=0$. Since equality~\eqref{ecuacion para a y b} is also true in this case, from the fact that $\rho+\sigma < 0$, it follows that
\begin{equation}\label{inecuacion para a y b}
v=(i-1)u+\rho+\sigma < (i-1)u.
\end{equation}
We assert that $\st_{\rho,\sigma}(G)\sim\st_{\rho,\sigma}(R)$. In fact, otherwise by~\cite{GGV}*{Proposition~2.4(1)},
$$
\st_{\rho,\sigma}(G)= (i-1)\st_{\rho,\sigma}(R)+(1,1),
$$
and so
$$
t=v_{0,1}(\st_{\rho,\sigma}(G))=(i-1)v_{0,1}(\st_{\rho,\sigma}(R))+1=s(i-1)+1,
$$
which, combined with the fact that $u>0$ by hypothesis, implies that
$$
v = \frac{ut}{s} = u(i-1)+\frac{u}{s}>u(i-1),
$$
contradicting~\eqref{inecuacion para a y b}. Write $\st_{\rho,\sigma}(G)=\frac{\mu_1}{\vartheta}\st_{\rho,\sigma}(R)$ for some coprime natural numbers~$\mu_1$ and~$\vartheta$. Recall from Notation~\ref{starting y end} that $N_2=\gcd(\upsilon_2,\nu_2)$, where $\left(\frac{\upsilon_2}{l},\nu_2\right) =\st_{\rho,\sigma}(R)$. It is clear that $\vartheta\mid N_2$. Set $t'\coloneqq \vartheta(i-1)-\mu_1$, and from $v=\frac{\mu_1}{\vartheta}u$ and~\eqref{ecuacion para a y b} we obtain
 $t'=-\frac{\rho+\sigma}{u}\vartheta$. Since $\ell>-\frac{\rho+\sigma}{u}>0$ and $\vartheta>0$, we have $0<t'<\ell \vartheta$. Moreover, again from $v=\frac{\mu_1}{\vartheta}u$ and~\eqref{ecuacion para a y b} it follows that
$$
0 = u(i-1) - v + \rho + \sigma = u\frac{\vartheta(i-1)-\mu_1}{\vartheta}+\rho + \sigma = v_{\rho,\sigma} \left(\frac{t'}{\vartheta} \st_{\rho,\sigma}(R) + (1,1)\right),
$$
which implies that $(\rho,\sigma) = -\dir\left(t'\st_{\rho,\sigma}(R) + \vartheta (1,1)\right)$, since $t' \st_{\rho,\sigma}(R) + \vartheta (1,1)\notin\mathds{N}_0(1,1)$. This shows that we are in the case~(3). Note that in this case $\nu_2=s>0$.

Finally if $l=1$ and statement~(1) is satisfied, then $\en_{\rho,\sigma}(R) = (u - j\sigma,j)$, and so
$$
v_{1,-2}(\en_{\rho,\sigma}(R))= u - j\sigma - 2j = u - j(\sigma + 2)>0,
$$
since $\sigma\le -2$.
\end{proof}

\begin{remark} Note that the from the equality $-\dir\left(t'\st_{\rho,\sigma}(R)+\vartheta (1,1)\right)$ in items~(2) and~(3) of Proposition~\ref{finitas direcciones} it follows that $t'= -\vartheta\frac{\rho+\sigma}{v_{\rho,\sigma}(R)}$.
\end{remark}

\begin{example}
A straightforward computation shows that for each $i,j,u\in \mathds{N}$ and each $\lambda\in K^{\times}$, the polynomials $R\coloneqq  x^u(x^{-\sigma}y -\lambda)^j$ and $G\coloneqq \varpi^{-1} x^v(x^{-\sigma}y - \lambda)^{j(i-1)+1}$, where $v\coloneqq u(i-1)+\sigma+1$ and $\varpi\coloneqq j((j-1)u(i-1) - j(\sigma+1))>0$, are in case~(1) of Proposition~\ref{finitas direcciones}. In Remark~\ref{caso 2} we will give a family of examples which are all in case~(2). Finally, an example in case~(3), with $i=2$, is given by $R\coloneqq 9 x^{14} y^8 (1 + x^8 y^5)$ and $G\coloneqq -x^7 y^4 (1 + x^8 y^5)^2$.
\end{example}

\subsection{Admissible chains}
An $(m,n)$-pair $(P,Q)$ in $L$ determines a chain of homogeneous polynomials $R_j$ together with a chain of segments (the lower part of $\frac{1}{m}H(P)$, where $H(P)$ is the Newton polygon of $P$). The last point of this chain is called the last lower corner of $(P,Q)$ (see Definition~\ref{llc}). Motivated by these facts, in this section we introduce the notion of admissible chain, consisting basically of a family of homogeneous polynomials and segments as above. This notion does not depend on the existence of a counterexample.  The final point of a such chain is named a possible last lower corner (see Definition~\ref{Definicion pllc}).

Suppose that the Jacobian conjecture is false and let
$$
B := \begin{cases}\infty & \text{if the Jacobian conjecture is true,}\\
\min\bigl(\gcd(v_{1,1}(P),v_{1,1}(Q))\bigr)&\text{if JC is false, where $(P,Q)$ runs on the counterexamples.}
\end{cases}
$$
The Jacobian conjecture is false if and only if the set of last lower corner's is not empty.

\smallskip

In this section we prove that some points in $\mathds{N}_0\times \mathds{N}_0$ are possible last lower corner's and that some other points are not.

\begin{definition}\label{cadena admisible} An {\em admissible chain of length $k\in\mathds{N}_0$} is a triple of families
$$
\mathfrak{C}=\bigl((C_j)_{j\in\{0,\dots,k\}},(R_j)_{j\in\{1,\dots,k\}},(\rho_j,\sigma_j)_{j\in\{1,\dots,k\}}\bigr),
$$
with $C_j\in \mathds{N}_0\times\mathds{N}_0$, $R_j\in L$ and $(\rho_j,\sigma_j)\in\mathfrak{V}\cap \hspace{0.7pt}](0,-1),(1,-1)[\hspace{0.7pt}$, such that

\begin{enumerate}

\smallskip

\item $C_0=(l,0)$ for some $l\in\mathds{N}$,

\smallskip

\item $(\rho_j,\sigma_j)>(\rho_{j-1},\sigma_{j-1})$, for $j\in \{2,\dots,k\}$,

\smallskip
\end{enumerate}
and the following facts hold for $j\in \{1,\dots,k\}$:
\begin{enumerate}[resume*]

\smallskip

\item $R_j$ is $(\rho_j,\sigma_j)$-homogeneous and is not a monomial,

\smallskip

\item $C_{j-1}=\st_{\rho_j,\sigma_j}(R_j)$,

\smallskip

\item $C_j=\en_{\rho_j,\sigma_j}(R_j)$,

\smallskip

\item $v_{1,-1}(C_j)>0$,

\smallskip

\item $v_{\rho_j,\sigma_j}(C_j)\ge\rho_j$,

\smallskip

\item there exist a $(\rho_j,\sigma_j)$-homogeneous $G_j\in L$ and $i_j\in\mathds{N}$ such that $[G_j,R_j]=R_j^{i_j}$.

\smallskip

\end{enumerate}
The point $C_k$ is called {\em the end point of $\mathfrak{C}$} and denoted $C_{\text{fin}}(\mathfrak{C})$ or sim\-ply~$C_{\text{fin}}$.
\end{definition}

\begin{remark}\label{los B_is crecen} For $0\le j\le k$ write $(a_j,b_j)\coloneqq C_j$. Since $(\rho_j,\sigma_j)\in \hspace{0.7pt}](0,-1),(1,-1) [\hspace{0.7pt}$, from items~(4) and~(5) it follows that $(a_i)_{0\le i\le k}$, $(b_i)_{0\le i\le k}$ and $(a_i-b_i)_{0\le i\le k}$ are increasing sequences. So, by item~(1), we conclude that $a_j,b_j,a_j-b_j\in \mathds{N}$ for all $1\le j\le k$.
\end{remark}

\begin{definition} \label{Definicion pllc} A point $(a,b)\in \mathds{N}_0\times\mathds{N}_0$ is called {\em a possible last lower corner} if there exists an admissible chain such that $C_{\text{fin}}=(a,b)$.
\end{definition}

\begin{remark} Note that for every $t\in\mathds{N}$, the point $(t,0)$ is a possible last lower corner, corresponding to a chain of length $0$.
\end{remark}

\begin{remark} Let $(a,b)\in\mathds{N}_0\times \mathds{N}_0$ with $a>b>0$. If $a-2b>0$, then by Remark~\ref{condicion para v sub rho sigma1} there exist $k>0$ and $j>0$ such that $(a,b)=(k,0)+j(2,1)$, which, by Proposition~\ref{ejemplo}, implies that there exist a $(1,-2)$-homogeneous polynomials $R$ such that
$$
\bigr(\bigl((k,0),(a,b)\bigr),R,(1,-2)\bigr),
$$
is an admissible chain of length~$1$. So, $(a,b)$ is a possible~last lower corner.
\end{remark}

\begin{remark}\label{parte inferior} Let $(P,Q)$ be an $(m,n)$-pair in $L$. By \cite{vdE}*{Proposition~10.2.6} there exists $(t,0)\in \Supp(P)$ with $t\in \mathds{N}$. Assume that $t$ is maximum satisfying this condition. If we run counterclockwise along several edges of the Newton polygon $H(P)$ of $P$, beginning in $(t,0)$ and stopping at a corner below the main diagonal of the plane, and we apply the homothety of center $0$ and ratio $\frac{1}{m}$ to each edge and each corner, then we obtain the families of edges and vertices of an admissible chain. In fact, by Proposition~\ref{existe R} each corner of $H(P)$ belongs to $m\mathds{N}_0\times m\mathds{N}_0$; by \cite{GGV}*{Definition~4.3} and Proposition~\ref{restriccion de direcciones} the directions of the edges belong to $\hspace{0.7pt}](0,-1),(1,-1)[\hspace{0.7pt}$; by~\cite{vdE}*{Proposition~10.2.6} condition~(1) of Definition~\ref{cadena admisible} is fulfilled; conditions~(2) and~(6) are clear; the existence of $R_j$'s and $G_j$'s satisfying conditions~(3), (4), (5) and~(8) follows from Proposition~\ref{existe R}, Remark~\ref{en de R} and Corollary~\ref{[G,R]=R^i}; and condition~(7) holds by Proposition~\ref{desigualdad}.
\end{remark}

\begin{definition}\label{llc} A point $(a,b)\in \mathds{N}_0\times\mathds{N}_0$ is called {\em a last lower corner} if there exists an admissible chain, obtained as in Remark~\ref{parte inferior} from a standard $(m,n)$-pair $(P,Q)$ in $L$, such that $C_{\text{fin}}=(a,b)$ and $m(a,b)$ is the last corner of $H(P)$ below the main diagonal of the plane. In this case we also will say that $(a,b)$ is {\em the last lower corner} of $(P,Q)$.
\end{definition}

\begin{remark}\label{Con Jac falsa} Let $(P,Q)$ be a standard $(m,n)$-pair in $L$ and let $(\rho,\sigma)\in\hspace{2pt} ](0,-1),(1,-1)[\cap \Dir(P)$ such that $(a,b)\coloneqq \frac 1m\en_{\rho,\sigma}(P)$ satisfies $a>b>0$. Then, by Remark~\ref{parte inferior} we know that $(a,b)$ is a possible last lower corner.
\end{remark}

\begin{remark}\label{Con Jac falsa starting triple} Let $(P,Q)$ be a standard $(m,n)$-pair in $L$ and let $(A_0,A_0',(\rho,\sigma))$ be the starting triple of $(P,Q)$ (see~\cite{GGV}*{Definition~6.2}). By~\cite{GGV}*{Remark~6.3} we know that $A_0'$ is a last lower corner.
\end{remark}

\begin{remark}\label{para prop 2.18} Let $(\rho,\sigma)\in\hspace{2pt} ](0,-1),(1,-1)[$ be a direction and let $R$ and $G$ be $(\rho,\sigma)$-ho\-mo\-ge\-neous polynomials such that $R$ is not a monomial and $[G,R]= R^i$, with $i\in \mathds{N}$. Arguing as in Re\-mark~\ref{condicion para v sub rho sigma2} we see that if $v_{\rho,\sigma}(R)\ge \rho$ and $v_{1,-2}(\en_{\rho,\sigma}(R))\le 0$, then
$$
v_{\rho,\sigma}(R)+\rho+\sigma>0.
$$
Since, moreover, it is clear that $v_{\rho,\sigma}(R)>0$ and $v_{1,-1}(\st_{\rho,\sigma}(R))\ne 0$, the hypothesis of Proposition~\ref{finitas direcciones} are fulfilled with $\ell=1$.
\end{remark}

\begin{lemma}\label{para la cota} Let $(\rho,\sigma)\in\hspace{2pt} ](0,-1),(1,-1)[$ be a direction and let $R$ and $G$ be $(\rho,\sigma)$-ho\-mo\-ge\-neous polynomials such that $R$ is not a monomial and $[G,R]= R^i$, where $i\in \mathds{N}$. Assume that $v_{\rho,\sigma}(R)\ge \rho$ and $v_{1,-2}(\en_{\rho,\sigma}(R))\le 0$, and write $(\alpha,\beta)\coloneqq  \st_{\rho,\sigma}(R)$ and $(\alpha',\beta')\coloneqq \en_{\rho,\sigma}(R)$. If $\beta<\alpha$ and $\beta\le (\alpha - \beta - 1)^2$, then  $\beta'<\alpha'$ and $\beta'\le (\alpha' - \beta' - 1)^2$.
\end{lemma}

\begin{proof} First note that $\alpha-\beta<\alpha'-\beta'$ and $\beta'<\alpha'$, because $(\rho,\sigma)\in\hspace{2pt} ](0,-1),(1,-1)[$ and $\beta<\alpha$. By Remark~\ref{para prop 2.18} the hypothesis of Proposition~\ref{finitas direcciones} are fulfilled with $\ell =1$. We will use freely its notations. Since $v_{1,-2}(\alpha',\beta')\le 0$, necessarily statements~(2) or~(3) of that proposition hold. In both cases we will use that,
\begin{equation}\label{auxiliar}
2(\alpha-\beta)(\alpha-\beta-1)= 2(\alpha-\beta-1)^2 + 2(\alpha-\beta-1)\ge 2\beta >\beta,
\end{equation}
since $\beta<\alpha$ and $\beta\le (\alpha - \beta - 1)^2$.

\smallskip

In the first case there exist $t',\vartheta\in \mathds{N}$ and $\zeta\in \mathds{Q}_{>0}$ such that
\begin{equation}\label{paquito}
(\alpha',\beta')=(\alpha,\beta)+\zeta\Bigl((\alpha,\beta)+\frac{\vartheta}{t'}(1,1)\Bigr).
\end{equation}
Moreover, $0<t'<\vartheta\le N_1$, where $N_1\coloneqq \gcd(\alpha'-\alpha,\beta'-\beta)$, and so
$$
\frac{\vartheta}{t'}\le N_1 \mid v_{1,-1}(\alpha'-\alpha,\beta'-\beta)=\zeta(\alpha-\beta)\le (\alpha-\beta)\zeta(\alpha-\beta),
$$
which combined with~\eqref{auxiliar} yields
$$
\beta +\frac{\vartheta}{t'}\le(\alpha - \beta)(\zeta(\alpha - \beta)+2(\alpha - \beta - 1)).
$$
Multiplying this inequality by $\zeta$ and adding the inequality $\beta\le (\alpha - \beta - 1)^2$, we obtain that
$$
\beta + \zeta\left(\beta + \frac{\vartheta}{t'}\right)\le(\zeta(\alpha - \beta) + \alpha - \beta - 1)^2.
$$
Combining this with~\eqref{paquito} we obtain $\beta'\le (\alpha' - \beta' - 1)^2$, as desired.

\smallskip

In the second case there exist $t',\vartheta\in \mathds{N}$ and $\zeta\in \mathds{Q}_{>0}$ such that
$$
(\alpha',\beta')=(\alpha,\beta)+\zeta\Bigl((\alpha,\beta)+\frac{\vartheta}{t'}(1,1)\Bigr),
$$
Moreover $\zeta (\alpha - \beta) = \alpha' - \beta' + \beta-\alpha\in\mathds{N}$  and $0<t'<\vartheta\mid N_2$, where $N_2\coloneqq \gcd(\alpha,\beta)$. So
$$
\frac{\vartheta}{t'}\le \vartheta\le N_2 \le \alpha - \beta \le (\alpha - \beta) \zeta(\alpha - \beta),
$$
which combined with~\eqref{auxiliar} yields
$$
\beta +\frac{\vartheta}{t'} \le (\alpha - \beta)(\zeta (\alpha - \beta) + 2(\alpha - \beta - 1)).
$$
Hence, arguing as above we obtain that $\beta' \le (\alpha' - \beta' - 1)^2$, concluding the proof.
\end{proof}

\begin{proposition}\label{cota} If $A\coloneqq (a,b)$ is a possible last lower corner, then $b<a$ and $b\le (a - b - 1)^2$.
\end{proposition}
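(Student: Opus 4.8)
The plan is to induct on the length $k$ of an admissible chain $\mathfrak{C}=\bigl((C_j)_{0\le j\le k},(R_j)_{1\le j\le k},(\rho_j,\sigma_j)_{1\le j\le k}\bigr)$ witnessing that $(a',b')=C_k$ is a possible $A'_0$; write $(a_j,b_j)\coloneqq C_j$. The inequality $b'<a'$ needs no induction: by Remark~\ref{los B_is crecen} the sequence $(a_j-b_j)_{0\le j\le k}$ is increasing, and $a_0-b_0=l\ge 1$ by condition~(1) of Definition~\ref{cadena admisible}, so $a'-b'\ge 1$. It thus remains to prove $b'\le(a'-b'-1)^2$.

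For $k=0$ one has $(a',b')=(l,0)$ and the claim is $0\le(l-1)^2$. For $k\ge 1$, I would first observe that deleting the last link turns $\mathfrak{C}$ into an admissible chain of length $k-1$ (conditions~(1)--(8) of Definition~\ref{cadena admisible} are inherited verbatim); hence its end point $(\alpha,\beta)\coloneqq C_{k-1}$ is a possible $A'_0$ and, by the inductive hypothesis, $\beta<\alpha$ and $\beta\le(\alpha-\beta-1)^2$. Then I would split on the sign of $v_{1,-2}(C_k)=a'-2b'$. If $a'>2b'$, the estimate is immediate and self-contained: then $a'-b'-1\ge b'$ (all three are integers with $a'-b'>b'$), so $(a'-b'-1)^2\ge(b')^2\ge b'$, since $b'\ge 0$; this is the regime exhibited in Proposition~\ref{ejemplo}. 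If $a'\le 2b'$, i.e.\ $v_{1,-2}(\en_{\rho_k,\sigma_k}(R_k))\le 0$, I would apply Lemma~\ref{para la cota} with $(\varrho,\varsigma)=(\rho_k,\sigma_k)$, $R=R_k$, and $G=G_k$, where $G_k$ and $i_k$ are as in condition~(8). Its hypotheses are exactly what the admissible chain provides: $(\rho_k,\sigma_k)\in\,](0,-1),(1,-1)[\,$; $R_k$ and $G_k$ are $(\rho_k,\sigma_k)$-homogeneous with $R_k$ not a monomial and $[G_k,R_k]=R_k^{i_k}$, $i_k\in\mathds{N}$ (conditions~(3) and~(8)); $v_{\rho_k,\sigma_k}(R_k)=v_{\rho_k,\sigma_k}(\en_{\rho_k,\sigma_k}(R_k))=v_{\rho_k,\sigma_k}(C_k)\ge\rho_k$ (condition~(7)); $v_{1,-2}(\en_{\rho_k,\sigma_k}(R_k))\le 0$ (the case hypothesis); and $\st_{\rho_k,\sigma_k}(R_k)=C_{k-1}=(\alpha,\beta)$ (condition~(4)), which satisfies $\beta<\alpha$ and $\beta\le(\alpha-\beta-1)^2$ by the inductive hypothesis. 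Since $\en_{\rho_k,\sigma_k}(R_k)=C_k=(a',b')$, Lemma~\ref{para la cota} yields $b'\le(a'-b'-1)^2$, completing the induction.

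The step I expect to demand the most care is not a computation but the realization that Lemma~\ref{para la cota} carries the extra hypothesis $v_{1,-2}(\en_{\varrho,\varsigma}(R))\le 0$, which is \emph{not} among the axioms of an admissible chain; so the case $a'>2b'$ must be peeled off and handled by hand (trivially, as above), and in the complementary case the argument is just the bookkeeping of matching each hypothesis of Lemma~\ref{para la cota} to a condition in Definition~\ref{cadena admisible} and feeding the inductive hypothesis in through the starting point $C_{k-1}=\st_{\rho_k,\sigma_k}(R_k)$.
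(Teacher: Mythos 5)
Your proposal is correct and follows essentially the same route as the paper: induction on the length of the admissible chain, splitting on the sign of $v_{1,-2}(C_k)$, with the case $v_{1,-2}(C_k)>0$ handled by the elementary integer estimate and the case $v_{1,-2}(C_k)\le 0$ handled by feeding the inductive hypothesis for $C_{k-1}=\st_{\rho_k,\sigma_k}(R_k)$ into Lemma~\ref{para la cota}. Your extra bookkeeping (truncating the chain, matching each hypothesis of the lemma to a condition of Definition~\ref{cadena admisible}, and getting $b'<a'$ from Remark~\ref{los B_is crecen}) only makes explicit what the paper leaves implicit.
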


\begin{proof} If $A=C_{\text{fin}}(\mathfrak{C})$ for an admissible chain $\mathfrak{C}$ of length~$0$, then $A=(t,0)$ for some~$t\in \mathds{N}$ and the result is obviously true. Assume that it is true for end points of  admissible chains of length~$k$ and that $A=C_{\text{fin}}(\mathfrak{C})$ for an admissible chain $\mathfrak{C}$ of length~$k+1$. If $v_{1,-2}(a,b)>0$, then $b<a - b$, so that $b\le a - b-1$, and hence $b\le (a - b-1)^2$.  On the other hand, if~$v_{1,-2}(a,b)\le 0$, then by the inductive hypothesis and Lemma~\ref{para la cota}, we also have $b<a$ and~$b\le (a - b - 1)^2$, as desired.
\end{proof}

\begin{corollary} For a fixed $g>0$ there are only finitely many points $A\in \mathds{N}_0\times \mathds{N}_0$ such that~$A$ is a possible last lower corner with $v_{1,-1}(A)=g$.
\end{corollary}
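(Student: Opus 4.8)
The plan is to read this off directly from Proposition~\ref{cota}. The first observation is purely notational: for a single lattice point $A'=(a',b')\in\mathds{N}_0\times\mathds{N}_0$ one has $v_{1,-1}(A')=1\cdot a'+(-1)\cdot b'=a'-b'$, so imposing $v_{1,-1}(A')=g$ is exactly the same as fixing the difference $a'-b'$ to the constant value $g>0$.

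Now suppose $A'=(a',b')$ is a possible $A'_0$ with $a'-b'=g$. By Proposition~\ref{cota} we have $b'<a'$ (here automatic, since $g>0$) and, crucially, $b'\le(a'-b'-1)^2=(g-1)^2$. Thus $b'$ lies in the finite set $\{0,1,\dots,(g-1)^2\}$, and since $a'=b'+g$ is then determined by $b'$, there are at most $(g-1)^2+1$ points $A'\in\mathds{N}_0\times\mathds{N}_0$ that are possible $A'_0$ with $v_{1,-1}(A')=g$. This yields the claimed finiteness.

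I do not expect a genuine obstacle here: all the content of the statement is packed into Proposition~\ref{cota} (and behind it in Lemma~\ref{para la cota} and Proposition~\ref{finitas direcciones}). The only minor points worth spelling out are the identification of $v_{1,-1}$ of a point with $a'-b'$, and the degenerate case $g=1$, where the bound forces $b'=0$ and $a'=1$; this is consistent with the earlier remark that $(l,0)$ is a possible $A'_0$ for every $l\in\mathds{N}$, since $v_{1,-1}(l,0)=l$.
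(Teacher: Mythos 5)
Your argument is exactly the intended one: the paper states this corollary without proof as an immediate consequence of Proposition~\ref{cota}, and your reduction (fixing $a'-b'=g$ turns the bound $b'\le (a'-b'-1)^2$ into $b'\le (g-1)^2$, with $a'=b'+g$ determined) is correct and complete.
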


In the following remark we show that if $(a,b)\in \mathds{N}_0\times \mathds{N}_0$ satisfies $0<b<a$ and $b = (a-b-1)^2$, then $(a,b)$ is a possible last lower corner.

\begin{remark}\label{caso 2} Let $n\in\mathds{N}$. The following example attains the equality in the second bound in Proposition~\ref{cota}. Let $R=x(w+1)^n$, where $w=x^{n+1}y^n$. The triple
$$
\bigl(((1,0),(n^2+n+1,n^2)),(R),(n,-n-1)\bigr)
$$
is an admissible chain of length~$1$. In fact it is easy to check that $(n,-n-1)\in \hspace{2pt} ](0,-1),(1,-1)[$ and that conditions~(1) and (3)--(7) of Definition~\ref{cadena admisible} are fulfilled, condition~(2) is empty, and condition~(8) holds since the polynomial $G\coloneqq \frac{-1}{n+1}x^2 y(w+1)^{n-1}(w+n+1)$ satisfies $[G,R]=R^2$. Hence $(a,b) \coloneqq (n^2+n+1,n^2)$ is a possible last lower corner that satisfies $b = (a - b - 1)^2$. This example also shows that for any fixed $\lambda<1$, there is $(a,b)$, which is a possible last lower corner that satisfies $b>\lambda a$.
\end{remark}

In the proof of the following proposition we will use the concept of cross product introduced below~\cite{GGV}*{Notation~1.6} and we will use the property~(3.1) established at the beginning of Section~3 of~\cite{GGV}.

\begin{proposition}\label{casos imposibles} For each $\wp, n'\in\mathds{N}$ with $n'\ge 2$, the point $\wp (n',n'-1)$ is not a possible last lower corner.
\end{proposition}

\begin{proof} Assume by contradiction that there exists an admissible chain
$$
\bigl((C_j)_{j\in\{0,\dots,k\}},(R_j)_{j\in\{1,\dots,k\}},(\rho_j,\sigma_j)_{j\in\{1,\dots,k\}}\bigr)
$$
with $C_k\!=\!\wp(n',n'-1)$ and let $G_k$ be as in Definition~\ref{cadena admisible}(8). By the conditions of that de\-finition, the fact that $v_{1,-2}(C_k) = \wp n' - 2\wp (n'-1)<0$ and Remark~\ref{para prop 2.18}, the $(\rho_k,\sigma_k)$-ho\-mo\-ge\-neous polynomials $R_k$ and $G_k$ satisfy the hypothesis of Proposition~\ref{finitas direcciones} with $\ell=1$. Moreover $\rho_k > 1$, because otherwise $v_{\rho_k,\sigma_k}(R_k) = \wp (n'+(n'-1)\sigma_k)\le 0$. Hence case~(1) of that proposition can not occur. Now set $(\tilde{\rho},\tilde{\sigma})\coloneqq (n'-1,-n')$ and write $(a_{k-1},b_{k-1})\coloneqq C_{k-1}$. From
$$
0<\frac{1}{\wp } v_{\rho_k,\sigma_k}(C_k)=\rho_k n'+\sigma_k(n'-1) = (\tilde{\rho},\tilde{\sigma})\times(\rho_k,\sigma_k),
$$
we obtain that $(\tilde{\rho},\tilde{\sigma})<(\rho_k,\sigma_k)<(-\tilde{\rho},-\tilde{\sigma})$. Hence, by~\cite{GGV}*{Remark~1.8},
$$
v_{\tilde{\rho},\tilde{\sigma}}(a_{k-1},b_{k-1})= v_{\tilde{\rho},\tilde{\sigma}}(\st_{\rho_k,\sigma_k}(R_k)) > v_{\tilde{\rho},\tilde{\sigma}}(\en_{\rho_k,\sigma_k}(R))=0.
$$
Consequently, we have
\begin{equation}\label{desigualdad para N1 y N2}
N_1,N_2\le v_{\tilde{\rho},\tilde{\sigma}}(a_{k-1},b_{k-1}),
\end{equation}
where $N_1$ and $N_2$ are as in Notation~\ref{starting y end}. In fact,\ $N_2$ divides every integer combination of $a_{k-1}$ and $b_{k-1}$, and, in particular, it divides $v_{\tilde{\rho},\tilde{\sigma}}(a_{k-1},b_{k-1})>0$. Similarly, $N_1$ divides every integer combination of $\wp n'-a_{k-1}$ and $\wp (n'-1)-b_{k-1}$, and, in particular, it divides
$$
v_{\tilde{\rho},\tilde{\sigma}}(a_{k-1}-\wp n',b_{k-1}- \wp (n'-1))= v_{\tilde{\rho},\tilde{\sigma}}(a_{k-1},b_{k-1})>0.
$$
Suppose that we are in case~(2) of Proposition~\ref{finitas direcciones}. So there exist $\vartheta,t'\in \mathds{N}$ and $\lambda\in\mathds{Q}$ such that
\begin{equation}\label{desigualdades e igualdad}
0<t'< \vartheta \le N_1 \quad\text{and}\quad (a_{k-1},b_{k-1})+\frac{\vartheta}{t'}(1,1)=\lambda(-\sigma_k,\rho_k).
\end{equation}
Moreover, $\lambda>0$ since $0<v_{1,-1}(a_{k-1},b_{k-1})=-\lambda(\rho_k+\sigma_k)$ and $\rho_k+\sigma_k<0$. On one hand, by~\eqref{desigualdad para N1 y N2} and the inequalities in~\eqref{desigualdades e igualdad}, we have
$$
v_{\tilde{\rho},\tilde{\sigma}}\left((a_{k-1},b_{k-1})+\frac{\vartheta}{t'}(1,1)\right)=v_{\tilde{\rho},\tilde{\sigma}}(a_{k-1},b_{k-1})+ \frac{\vartheta}{t'}(n'-1-n') \ge v_{\tilde{\rho},\tilde{\sigma}}(a_{k-1},b_{k-1})-N_1\ge 0,
$$
and, on the other hand, by equality in~\eqref{desigualdades e igualdad} and the fact that $\lambda>0$, we have
\begin{align*}
v_{\tilde{\rho},\tilde{\sigma}}\left((a_{k-1},b_{k-1})+\frac{\vartheta}{t'}(1,1)\right)&=v_{\tilde{\rho},\tilde{\sigma}}(\lambda(-\sigma_k,\rho_k))= \lambda v_{\rho_k,\sigma_k}(\tilde{\sigma},-\tilde{\rho})\\
&=-\frac{\lambda}{\wp} v_{\rho_k,\sigma_k}(\wp(n',n'-1))=-\frac{\lambda}{\wp}v_{\rho_k,\sigma_k}(C_k)<0,
\end{align*}
which yields a contradiction. If we are in case~(3) of Proposition~\ref{finitas direcciones}, then replacing $N_1$ by $N_2$, the same argument works, finishing the proof.
\end{proof}

\begin{remark}
Propositions~\ref{cota} and~\ref{casos imposibles} give nice conditions for possible last lower corners, which are easy to understand. These conditions follow directly from Proposition~\ref{finitas direcciones}. Consequently, when we write an algorithm in order to compute the possible last lower corners, it suffices to consider the conditions of Proposition~\ref{finitas direcciones}, since then the conditions of Propositions~\ref{cota} and~\ref{casos imposibles} are automatically satisfied.
\end{remark}

\begin{remark} Let $A_0$ be as in the introduction. Clearly Proposition~\ref{casos imposibles} shows that $(2,1)$, $(3,2)$, $(6,3)$ and $(8,4)$ are not possible last lower corner's. Now we state without a formal proof, that for $A_0=(10,25)$ we necessarily have $A_0'=(2,1)$ and for $A_0=(14,35)$ we necessarily have $A_0'\in\{(6,3),(3,2)\}$. This allows to discard directly these corners $A_0$ of the list found in~\cite{H}*{Theorem~2.24}, which is included in those given in~\cite{GGV}*{Remark~7.9}.  The corners found in the two mentioned lists were also given without any proof, and were found by a computer search, but the algorithm
was not given explicitly. The same algorithm justifies the assertions above. Moreover, a straightforward argument shows that if the corner $(8,32)$ realizes an $A_0$, then we would obtain, after a transformation via an automorphism, the corner $A_0'=(8,4)$, which also is impossible. So our results permit to discard three of the corners of~\cite{H}*{Theorem~2.24}. We also can discard two of the infinite families of~\cite{H}*{Theorem~2.25}. In fact, the families $(5k+3,3k+2)$ and $(4k+3,k+1)$, corresponding to $A_0=(7,21)$, come from $A_0'=(2,1)$, which is impossible. This is the first time since Heitmann found the corners, that one of the infinite families can be discarded.

Furthermore, we can also discard some of the corners found in~\cite{GGV}*{Remark~7.9}, which were not found by Heitmann. Let $B_0$ and $B_1$ be as in that remark. The cases with $B_0=(6,15)$ and $B_1=(6,18+6k)$ where $18+6k$ is not a multiple of $30$, would lead to an $A_0'=(6,3)$ and can be discarded. Similarly $B_0=(8,28)$ and $B_1=(8,40)$ lead to $A_0'=(8,4)$, which is impossible; and $B_0=(9,21)$ and $B_1=(9,27)$ lead to an $A_0'=(9,6)$, which also is impossible.
\end{remark}

\paragraph*{Outlook}

A more computational paper is in preparation, where we will make explicit the algorithms that yields the corners of both lists (in fact the list of Heitmann is contained in the list of~\cite{GGV}), and we will explain the construction of the infinite families. We also will give an algorithm to determine all possible last lower corner with $v_{1,-1}(A_0')<N$ for some fixed $N$.

\begin{bibdiv}
\begin{biblist}

\bib{A}{book}{
   author={Abhyankar, S. S.},
   title={Lectures on expansion techniques in algebraic geometry},
   series={Tata Institute of Fundamental Research Lectures on Mathematics
   and Physics},
   volume={57},
   note={Notes by Balwant Singh},
   publisher={Tata Institute of Fundamental Research},
   place={Bombay},
   date={1977},
   pages={iv+168},
   review={\MR{542446 (80m:14016)}},
}

\bib{A2}{article}{
   author={Abhyankar, S. S.},
   title={Some thoughts on the Jacobian Conjecture, Part II},
   journal={Journal of Algebra},
   volume={319},
   date={2008},
   pages={1154-1248},
   issn={0021-8693},
}

\bib{A3}{article}{
   author={Abhyankar, S. S.},
   title={Some thoughts on the Jacobian Conjecture. III.},
   journal={Journal of Algebra},
   volume={320},
   date={2008},
   pages={2720-2826},
   issn={0021-8693},
}

\bib{CN}{article}{
   author={Cassou-Nogu{\`e}s, Pierrette},
   title={Newton trees at infinity of algebraic curves},
   conference={
      title={Affine algebraic geometry},
   },
   book={
      series={CRM Proc. Lecture Notes},
      volume={54},
      publisher={Amer. Math. Soc., Providence, RI},
   },
   date={2011},
   pages={1--19},
   review={\MR{2768630 (2012i:14034)}},
}

\bib{D}{article}{
   author={Dixmier, Jacques},
   title={Sur les alg\`ebres de Weyl},
   language={French},
   journal={Bull. Soc. Math. France},
   volume={96},
   date={1968},
   pages={209--242},
   issn={0037-9484},
   review={\MR{0242897 (39 \#4224)}},
}

\bib{E}{book}{
   author={Eisenbud, David},
   title={Commutative algebra},
   series={Graduate Texts in Mathematics},
   volume={150},
   note={With a view toward algebraic geometry},
   publisher={Springer-Verlag, New York},
   date={1995},
   pages={xvi+785},
   isbn={0-387-94268-8},
   isbn={0-387-94269-6},
   review={\MR{1322960 (97a:13001)}},
   doi={10.1007/978-1-4612-5350-1},
}

\bib{GGV}{article}{
   author={Valqui, Christian},
   author={Guccione, Jorge A.},
   author={Guccione, Juan J.},
   title={On the shape of possible counterexamples to the Jacobian
   Conjecture},
   journal={J. Algebra},
   volume={471},
   date={2017},
   pages={13--74},
   issn={0021-8693},
   review={\MR{3569178}},
   doi={10.1016/j.jalgebra.2016.08.039},
}

\bib{GGV1}{article}{
author={Guccione, Jorge Alberto},
author={Guccione, Juan Jos\'e},
author={Valqui, Christian},
   title={A differential equation for polynomials related to the Jacobian conjecture},
   journal={PRO-Mathematica},
   volume={27},
   date={2013},
   pages={83--98},
   issn={1012-3938},
   }

\bib{G-G-V1}{article}{
   author={Guccione, Jorge A.},
   author={Guccione, Juan J.},
   author={Valqui, Christian},
   title={The Dixmier conjecture and the shape of possible counterexamples},
   journal={J. Algebra},
   volume={399},
   date={2014},
   pages={581--633},
   issn={0021-8693},
   review={\MR{3144604}},
   doi={10.1016/j.jalgebra.2013.10.011},
}

\bib{H}{article}{
   author={Heitmann, R},
   title={On the Jacobian conjecture},
   journal={Journal of Pure and Applied Algebra},
   volume={64},
   date={1990},
   pages={35--72},
   issn={0022-4049},
   review={\MR{1055020 (91c :14018)}},
}

\bib{J}{article}{
   author={Joseph, A},
   title={The Weyl algebra -- semisimple and nilpotent elements},
   journal={American Journal of Mathematics},
   volume={97},
   date={1975},
   pages={597--615},
   issn={0002-9327},
   review={\MR{0379615 (52 :520)}},
}

\bib{Ju}{article}{
   author={Jung, Heinrich W. E.},
   title={\"Uber ganze birationale Transformationen der Ebene},
   language={German},
   journal={J. Reine Angew. Math.},
   volume={184},
   date={1942},
   pages={161--174},
   issn={0075-4102},
   review={\MR{0008915 (5,74f)}},
}

\bib{K}{article}{
   author={Keller, Ott-Heinrich},
   title={Ganze Cremona-Transformationen},
   language={German},
   journal={Monatsh. Math. Phys.},
   volume={47},
   date={1939},
   number={1},
   pages={299--306},
   issn={0026-9255},
   review={\MR{1550818}},
   doi={10.1007/BF01695502},
}

\bib{ML}{article}{
   author={Makar-Limanov, Leonid},
   title={On the Newton polygon of a Jacobian mate. In Automorphisms in Birational and Affine Geometry. Springer Proceedings in Mathematics \& Statistics},
   volume={VIII},
   publisher={Springer},
   place={Terme, Italy},
   date={2014},
   pages={469-476},
   isbn={978-3-319-05681-4},
}

\bib{M}{article}{
   author={Moh, T. T.},
   title={On the Jacobian conjecture and the configurations of roots},
   journal={J. Reine Angew. Math.},
   volume={340},
   date={1983},
   pages={140--212},
   issn={0075-4102},
   review={\MR{691964 (84m:14018)}},
}

\bib{vdE}{book}{
   author={van den Essen, Arno},
   title={Polynomial automorphisms and the Jacobian conjecture},
   series={Progress in Mathematics},
   volume={190},
   publisher={Birkh\"auser Verlag},
   place={Basel},
   date={2000},
   pages={xviii+329},
   isbn={3-7643-6350-9},
   review={\MR{1790619 (2001j:14082)}},
   doi={10.1007/978-3-0348-8440-2},
}

\end{biblist}
\end{bibdiv}

\end{document}